\numberwithin{equation}{section}
\definecolor{am}{RGB}{0,101,189}
\newtheorem{remark}[theorem]{Remark}
\newtheorem{assumption}[theorem]{Assumption}
\definecolor{mygreen}{rgb}{0,.5,0}
\newcommand{\be}{\begin{equation}}
\newcommand{\ee}{\end{equation}}
\newcommand{\bee}{\begin{equation*}}
\newcommand{\eee}{\end{equation*}}
\newcommand{\bea}{\begin{eqnarray}}
\newcommand{\eea}{\end{eqnarray}}
\newcommand{\beaa}{\begin{eqnarray*}}
\newcommand{\eeaa}{\end{eqnarray*}}
\newcommand{\R}{\mathbb{R}}
\newcommand{\C}{\mathbb{C}}
\newcommand{\tr}{\mathrm{tr}}
\newcommand{\iprod}[2]{ \left\langle #1, #2 \right\rangle}
\newcommand{\half}{\frac{1}{2}}
\newcommand{\sym}{{\mathrm{sym}}}
\newcommand{\proj}{{\mathrm{Proj}}}
\newcommand{\dist}{\mathrm{dist}}
\newcommand{\brev}[1]{{\color{black}{#1}}}
\newcommand{\diag}{\mathrm{diag}}
\newcommand{\N}{{\mathbb{N}}}
\newcommand{\conv}{\mathrm{conv}}
\newcommand{\kg}{\kappa_g}
\newcommand{\kH}{\kappa_H}
\newcommand{\sspan}{\mathrm{span}}
\newcommand{\st}{\mathrm{s.t.}}
\newcommand{\Fsf}{\mathsf{F}}
\newcommand{\Diag}{\mathrm{Diag}}
\newcommand{\grad}{\mathrm{grad}\;\!}
\newcommand{\Hess}{\mathrm{Hess}\;\!}
\newcommand{\Hc}{{\mathcal{H}^{\mathrm{c}}}}
\newcommand{\He}{{\mathcal{H}^{\mathrm{e}}}}
\newcommand{\hf}{{\mathrm{hf}}}
\newcommand{\ks}{{\mathrm{ks}}}
\newcommand{\f}{{\mathrm{f}}}
\newcommand{\ion}{{\mathrm{ion}}}
\newcommand{\xc}{{\mathrm{xc}}}
\newcommand{\dvec}{\mathop{\mathrm{vec}}}
\newcommand{\dmat}{\mathop{\mathrm{mat}}}
\newcommand{\nn}{\nonumber}
\newcommand{\Bcal}{{\mathcal{B}}}
\newcommand{\Rcal}{{\mathcal{R}}}
\newcommand{\Ecal}{{\mathcal{E}}}
\newcommand{\Sfrak}{{\mathfrak{S}}}
\newcommand{\Vcal}{{\mathcal{V}}}
\newcommand{\argmin}{\mathop{\mathrm{arg\, min}}}
\begin{document}
\title{Structured Quasi-Newton Methods for Optimization with Orthogonality Constraints}

\author{
	Jiang Hu\thanks{Beijing International Center for Mathematical Research, Peking University, China
		(\email{jianghu@pku.edu.cn}).}
	\and
	Bo Jiang\thanks{School of Mathematical  Sciences,  Key Laboratory  for	NSLSCS  of  Jiangsu  Province,  Nanjing  Normal  University,  China (\email{jiangbo@njnu.edu.cn}). Research supported in part by NSFC grants 11501298 and 11671036, and by the NSF of Jiangsu Province (BK20150965).}
	\and
	Lin Lin\thanks{Department of Mathematics, University of California, Berkeley, Berkeley, CA 94720 and
Computational Research Division, Lawrence Berkeley National Laboratory, Berkeley, CA 94720
		(\email{linlin@math.berkeley.edu}). Research supported in part by the National Science
		Foundation under Grant No. DMS-1652330, the Department of Energy under Grants
		No. DE-SC0017867 and No. DE-AC02-05CH11231, and the SciDAC project.}
	\and
	Zaiwen Wen\thanks{Beijing International Center for Mathematical
		Research, Peking University, China (\email{wenzw@pku.edu.cn}).
		Research supported in part by NSFC grants 11831002, 11421101 and 91730302, and by the National Basic Research Project under grant 2015CB856002.}
	\and
	Yaxiang Yuan\thanks{State Key Laboratory of Scientific and Engineering Computing, Academy of Mathematics and Systems Science, Chinese Academy of Sciences,
		China (\email{yyx@lsec.cc.ac.cn}). Research supported in part by NSFC grants 11331012 and 11461161005.}
}
\maketitle

\begin{abstract}
	In this paper, we study structured quasi-Newton methods for optimization problems with orthogonality constraints. Note that the  Riemannian Hessian of the objective function requires both the Euclidean Hessian and the Euclidean gradient. In particular, we are interested in applications that the Euclidean Hessian itself consists of a computational cheap part and a significantly expensive part.  Our basic idea is to keep these parts of lower computational costs but substitute those parts of higher computational costs by the limited-memory quasi-Newton update. More specically, the part related to Euclidean gradient and the cheaper parts in the Euclidean Hessian are preserved.  The initial quasi-Newton matrix is further constructed from a limited-memory Nystr\"om approximation to the expensive part.  Consequently, our subproblems approximate the original objective function in the Euclidean space and preserve the orthogonality constraints without performing the so-called vector transports. When the subproblems are solved to sufficient accuracy, both global and local q-superlinear convergence can be established under mild conditions. Preliminary numerical experiments on the linear eigenvalue problem and the electronic structure calculation show the effectiveness of our method compared with the state-of-art algorithms.  \end{abstract}

\begin{keywords} optimization with orthogonality constraints, structured quasi-Newton method, limited-memory Nystr\"om approximation, Hartree-Fock total energy minimization, convergence.
\end{keywords}
\begin{AMS}15A18,  65K10, 65F15, 90C26,  90C30  \end{AMS}

\section{Introduction}
In this paper, we consider the optimization problem with orthogonality constraints:
\be \label{prob:gen} \min_{X \in \C^{n\times p}} \quad f(X) \quad \st \quad X^*X = I_p, \ee
where $f(X):  \C^{n\times p} \rightarrow \R$ is a $\R$-differentiable function \cite{Ken-CR-2009}. Although our proposed methods are applicable to a general function $f(X)$, we are in particular interested in the cases that the Euclidean Hessian $\nabla^2 f(X)$ takes a natural structure as 
\be \label{equ:f:hessian:structure}
\nabla^2 f(X) = \Hc(X) + \He(X),
\ee
where the computational cost of $\He(X)$ is much more expensive than that of
$\Hc(X)$. This situation occurs when $f$ is a summation of functions whose full
Hessian are expensive to be evaluated or even not accessible. A practical
example is the Hartree-Fock-like total energy minimization problem in electronic structure theory \cite{szabo2012modern,martin2004electronic}, where the computation cost
associated with the Fock exchange matrix is significantly larger than the cost of the remaining components.

There are extensive methods for solving \eqref{prob:gen} in the literature. By
{exploring} the geometry of the manifold (i.e.,  orthogonality
constraints), the Riemannian gradient descent, conjugate gradient (CG), Newton
and trust-region methods are proposed in
\cite{gabay1982minimizing,EdelmanAriasSmith1999,udriste1994convex,smith1994optimization,AbsilBakerGallivan2007,opt-manifold-book,
wen2013feasible}. Since the second-order information sometimes is not
available, the quasi-Newton type method serves as an
alternative method to guarantee the good convergence property. Different from
the Euclidean quasi-Newton method, the vector transport operation \cite{opt-manifold-book} is used to compare
tangent vectors in different tangent spaces. After obtaining a descent
direction, the so-called retraction provides a curvilinear search along
manifold. By adding some restrictions between differentiable retraction and
vector transport, a Riemannian Broyden-Fletcher-Goldfarb-Shanno (BFGS) method is presented
in \cite{qi2011numerical,Ring2012Optimization,seibert2013properties}. Due to the
requirement of differentiable retraction, the computational cost associated with the vector transport operation may be costly. To avoid this
disadvantage, authors in
\cite{huang2013optimization,huang2015riemannian,huang2015broyden,huang2018riemannian}
develop a new class of Riemannian BFGS methods, symmetric rank-one (SR1) and Broyden family methods. Moreover, a selection of Riemannian quasi-Newton methods has been implemented in the software package Manopt \cite{manopt} and ROPTLIB \cite{huang2016roptlib}.    

\subsection{Our contribution} Since the set of orthogonal matrices can be viewed
as the Stiefel manifold, the existing quasi-Newton methods focus on the construction of an approximation to the Riemannian Hessian $\Hess f(X)$:  
\be \label{eq:hess-original} \Hess f(X)[{\xi}] = \proj_{X}( \nabla^2 f(X)[{\xi}] - {\xi}\sym(X^* \nabla f(X))), \ee
where  {$\xi$ is any tangent vector in the tangent space $T_X := \{\xi \in
\C^{n \times p}: X^*\xi + \xi^*X = 0 \}$} and $\proj_{X}(Z):= Z - X\sym(X^*Z)$
is the projection of $Z$ onto the tangent space $T_X$ and $\sym(A) := (A + A^*)/2$.
See \cite{absil2013extrinsic} for details on the structure
\eqref{eq:hess-original}. We briefly summarize our contributions as follows.  
\begin{itemize}
  \item  By taking the advantage of this structure
\eqref{eq:hess-original}, we construct an approximation to Euclidean Hessian
$\nabla^2 f(X)$ instead of the full Riemannian Hessian $\Hess f(X)$ directly, but keep the
remaining parts $\xi \sym(X^* \nabla f(X))$ and $\proj_X(\cdot)$. Then, we solve a subproblem {with} orthogonality constraints, whose objective function uses an approximate second-order Taylor expansion of $f$ with an extra regularization term. Similar to \cite{hu2018adaptive}, the trust-region-like strategy for the update of the regularization parameter and the modified CG method for solving the subproblem are utilized. 
      The vector transport is not needed in
      since we are working in the ambient Euclidean space. 
	\item By further taking advantage of the structure
      \eqref{equ:f:hessian:structure} of $f$, we develop a structured quasi-Newton approach to
      construct an approximation to the expensive part $\He$ while
      {preserving} the cheap part $\Hc$. This kind of structured
      approximation usually yields a better property than the approximation
      constructed by the vanilla quasi-Newton method. For the construction of an initial approximation of $\He$, we also investigate a limited-memory Nystr\"om approximation, which gives a subspace approximation of a known good but still complicated approximation of $\He$. 
    \item  When the subproblems are solved to certain
    accuracy, both global and local q-superlinear convergence can be established
    under certain mild conditions.
\item Applications to the linear eigenvalue problem and the electronic structure calculation are presented. The proposed algorithms perform comparably well with state-of-art methods in these two applications. 
\end{itemize}
\subsection{Applications to electronic structure calculation}
Electronic structure theories, and particularly Kohn-Sham density functional theory (KSDFT), play an important role in quantum physics, quantum chemistry and materials science. This problem can be interpreted as a minimization problem for the electronic total energy over multiple electron wave functions which are orthogonal to each other. The mathematical structure of Kohn-Sham equations depends heavily on the choice of the exchange-correlation (XC) functional.  With some abuse of terminology, throughout the paper we will use KSDFT to refer to Kohn-Sham equations with local or semi-local exchange-correlation functionals.  Before discretization, the corresponding Kohn-Sham Hamiltonian is a differential operator. On the other hand, when hybrid exchange-correlation functionals \cite{becke1993density,heyd2003hybrid} are used, the Kohn-Sham Hamiltonian becomes an integro-differential operator, and the Kohn-Sham equations become Hartree-Fock-like equations.  Again with some abuse of terminology, we will refer to such calculations as the HF calculation.  

For KSDFT calculations, the most popular numerical scheme is the self-consistent
field (SCF) iteration 
 which can be  efficient when combined with certain charge mixing techniques.
Since the hybrid exchange-correlation functionals depend on all the elements of the density matrix, HF
calculations are usually more difficult than KSDFT calculations. One commonly used algorithm is called the nested two-level SCF method \cite{giannozzi2009quantum}.  
In the inner SCF loop, by fixing the density matrix and the hybrid exchange
operator, it only performs an update on the charge density $\rho$, which is
solved by the SCF iteration. Once the stopping criterion of the inner iteration
is satisfied, the density matrix is updated in the outer loop according to the
Kohn-Sham orbitals computed in the inner loop. This method can also utilize the
charge mixing schemes for the inner SCF loop to accelerate convergence.
Recently, by combining with the adaptively compressed exchange operator (ACE)
method \cite{lin2016adaptively}, the convergence {rate} of {the}
nested two-level SCF method is greatly improved. Another popular algorithm to solve
HF calculations is the commutator direction inversion of the iterative subspace
(C-DIIS) method. By storing the density matrix explicitly, it can often lead to accelerated convergence rate. However, when the size of the density matrix becomes large, the storage cost of the density matrix becomes prohibitively expensive. Thus Lin et al. \cite{hu2017projected} proposed the projected C-DIIS (PC-DIIS) method, which only requires storage of wave function type objects instead of the whole density matrix. 

HF calculations can be also solved via using {the aforementioned Riemannian optimization methods}  (e.g., a feasible gradient method on the Stiefel manifold \cite{wen2013feasible}) without storing the density matrix or the wave function. 
 However, these existing methods often do not use the structure of the Hessian
 in KSDFT or HF calculations. In this paper,  by exploiting the structure of the
 Hessian,  we apply our structured quasi-Newton method to solve these problems.
 Preliminary numerical experiments show that our algorithm  performs at least
 comparably well with state-of-art methods in their convergent case. In the case
 that state-of-art methods failed, our algorithm often returns high quality solutions.

\subsection{Organization}
This paper is organized as follows. In section \ref{section:SQN}, we introduce
our structured quasi-Newton method and present our algorithm. In section
\ref{sec:convergenceanalysis}, the global and local convergence is analyzed
under certain inexact conditions. In sections \ref{section:EIG}  and
\ref{section:ks:hf}, detailed applications to the linear eigenvalue problem and the electronic structure calculation are discussed. Finally, we demonstrate the efficiency of our proposed algorithm in section \ref{section:NUM}. 

\subsection{Notation} 
For a matrix $X \in \C^{n \times p}$, we use $\bar X$, $X^*$,  $\Re X$ and $\Im X$  to denote its complex conjugate, complex conjugate transpose, real and imaginary parts, respectively.    Let $\sspan\{X_1, \ldots, X_l\}$ be the space spanned by the matrices $X_1, \ldots, X_l$. {The vector denoted $\dvec(X)$ in $\C^{np}$ is formulated by stacking each column of $X$ one by one, from the first to the last column; the operator $\dmat(\cdot)$ is the inverse of $\dvec(\cdot)$, i.e., $\dmat(\dvec(X)) = X$.} Given two matrices $A, B \in \C^{n \times p}$, the Frobenius inner product  $\langle \cdot, \cdot\rangle$ is  defined as  {$\langle A,B \rangle = \tr(A^*B)$} and the corresponding Frobenius norm $\|\cdot\|_{{\Fsf}}$ is  defined as $\|A\|_{{\Fsf}} = \sqrt{\tr(A^*A)}$. The Hadamard product of $A$ and $B$  is $A  \odot B$ with $(A \odot B)_{ij} = A_{ij} B_{ij}$. For a matrix ${M} \in \C^{n \times n}$,  the operator  {$\diag(M)$} is a vector in $\C^n$ formulated by the main diagonal of {$M$}; and for ${c} \in \C^n$,  the operator $\Diag({c})$ is an $n$-by-$n$ diagonal matrix with the elements of ${c}$ on the main diagonal. {The notation $I_p$ denotes the $p$-by-$p$ identity matrix.} Let $\mathrm{St}(n,p):=\{X \in \C^{n\times p}: X^*X = I_p \}$ be the {(complex)} Stiefel manifold.  {The notation $\N$ refers to  the  set of all natural numbers}. 

\section{A structured quasi-Newton approach}  \label{section:SQN}
\subsection{Structured quasi-Newton subproblem} In this subsection, we develop the structured quasi-Newton subproblem for solving \eqref{prob:gen}.  
 Based on the assumption \eqref{equ:f:hessian:structure}, methods using the
 exact Hessian $\nabla^2 f(X)$ may not be the best choices. When the
 computational cost of the gradient $\nabla f(X)$ is significantly cheaper than
 that of the Hessian $\nabla^2 f(X)$,  the quasi-Newton methods, which mainly
 use the gradients $\nabla f(X)$ to construct an approximation to $\nabla^2
 f(X)$,  may outperform other methods.
 Considering the form \eqref{equ:f:hessian:structure}, we can construct a
 structured quasi-Newton approximation {$\Bcal^k$} for $\nabla^2 f(X^k)$. The
 details will be presented in  section \ref{section:construction:Bk}. Note that a similar idea has been presented in \cite{zhou2010global} for the unconstrained nonlinear least square problems \cite{kass1990nonlinear,sun2006optimization}. 
Then our subproblem at the $k$-th iteration is constructed as 
\be \label{prob:struct-QN} 
\min_{X \in \C^{n\times p}} \  m_k(X)  \quad  \st \quad X^*X = I, 
\ee
where 
\[ m_k(X):= {\Re}\iprod{\nabla f(X^k)}{X- X^k} + \half {\Re}\iprod{\Bcal^k[X-X^k]}{X-X^k} + \frac{\tau_k}{2} d(X, X^k) \]
is an approximation to $f(X)$ in the Euclidean space. For the second-order Taylor
expansion of $f(X)$ at a point $X^k$, we refer to \cite[section 1.1]{wen2013adaptive} for details. Here,
$\tau_k$ is a regularization parameter and $d(X,X^k)$ is a proximal term to guarantee the convergence.

The proximal term can be chosen as the quadratic regularization 
\be \label{eq:quad-dist-stiefel} d(X,X^k) = \|X-X^k\|_{{\Fsf}}^2 \ee
or the cubic regularization
\be \label{eq:cubic-dist-stiefel}  d(X, X^k) = \frac{2}{3} \| X - X^k \|_{{\Fsf}}^3. \ee 
In the following, we will mainly focus on the quadratic regularization \eqref{eq:quad-dist-stiefel}. Due to the Stiefel manifold constraint, the quadratic regularization \eqref{eq:quad-dist-stiefel}  is actually equivalent to the linear term {$-2\Re\iprod{X}{X^k}$}.  {By using} the Riemannian Hessian {formulation \eqref{eq:hess-original}} on the Stiefel manifold, we have
\be \label{eq:hess-stiefel} \Hess m_k(X^k)[{\xi}] = \proj_{X^k}\left({\Bcal^k}[{\xi}] - {\xi}\sym((X^k)^* \nabla f(X^k)\right) + \tau_k {\xi}, ~ {\xi} \in T_{X^k}. \ee
Hence, the regularization term is to shift the spectrum of the corresponding Riemannian Hessian of the approximation {$\Bcal^k$} with $\tau_k$.

The Riemannian quasi-Newton methods for \eqref{prob:gen} in the literature
\cite{huang2016roptlib,huang2015riemannian,huang2016riemannian,huang2015broyden}
focus on constructing an approximation to the Riemannian Hessian $\Hess
f(X^k)$ directly without using its special structure
\eqref{eq:hess-original}. Therefore, vector transport needs to be utilized to
transport the tangent vectors from different tangent spaces to one common tangent space. 
If $p \ll n$, the second term $\sym\left((X^k)^*\nabla f(X^k)\right)$ is a small-scaled matrix and thus {can be computed with low cost}. In this case, after computing the approximation ${\Bcal^k}[{\xi}]$ of $\nabla^2 f(X) [\xi]$, we obtain a structured Riemannian quasi-Newton approximation $\proj_{X^k}\left({\Bcal^k}[{\xi}] - {\xi}\sym((X^k)^* \nabla f(X^k)\right)$ of $\Hess f(X^k)[{\xi}]$ without using any vector transport.

\subsection{Construction of {$\Bcal^k$}} \label{section:construction:Bk}
The classical quasi-Newton methods construct the approximation $\Bcal^k$ such that it satisfies the secant condition
     \be\label{eq:secant:0} 
     {\Bcal^k[S^k] = \nabla f(X^k) - \nabla f(X^{k-1}),}
     \ee
     {where}  $S^k:=X^k - X^{k-1}$. 
   Noticing {that $\nabla^2 f(X)$ takes the natural} structure
   \eqref{equ:f:hessian:structure},  it is reasonable to keep the cheaper part
   $\Hc(X)$ while only to approximate  $\He(X)$. Specifically,  we derive the approximation {$\Bcal^k$} to the Hessian $\nabla^2 f(X^k)$  as
    \be \label{eq:struct-qn} \Bcal^k = \Hc(X^k) + \Ecal^k, \ee
    where ${\Ecal^k}$ is an approximation to $\He(X^k)$.  Substituting
    \eqref{eq:struct-qn}  into \eqref{eq:secant:0}, we can see that the
    approximation $\Ecal^k$  should satisfy the following revised secant condition 
            \be \label{eq:secant} {\Ecal^k} [S^k] = Y^k, \ee 
    where 
    \be \label{eq:yk} Y^k := \nabla f(X^k) - \nabla f(X^{k-1}) - \Hc(X^k) {[S^k]}. \ee

    {For the large scale optimization problems, the limited-memory quasi-Newton methods are preferred since they often make simple but good approximations of the exact Hessian. Considering that the part $\He(X^k)$} itself may not be positive definite even when $X^k$ is  optimal, we utilize  the limited-memory symmetric rank-one (LSR1) scheme to approximate $\He(X^k)$ such that it satisfies the secant equation \eqref{eq:secant}.

Let $l = \min\{k,m\}$. We define the $(np) \times l$ matrices $S^{k,m}$ and $Y^{k,m}$ by 
\[S^{k,m} = \left[ \dvec(S^{k-l}), \ldots, \dvec(S^{k-1})\right], \quad Y^{k,m} = \left[ \dvec(Y^{k-l}), \ldots, \dvec(Y^{k-1})\right].\] Let $\Ecal_0^k: \C^{n \times p} \rightarrow \C^{n \times p}$ be the initial approximation of $\He(X^k)$ and define  the $(np) \times l$ matrix ${\Sigma}^{k,m} := \left[\dvec\!\left(\Ecal_0^k[S^{k-l}]\right), \ldots, \dvec\!\left(\Ecal_0^k[S^{k-1}]\right)\right]$. 
       Let $F^{k,m}$ be a matrix in $\C^{l \times l}$ with  $(F^{k,m})_{i,j} =  \iprod{S^{k-l + i - 1}}{Y^{k-l + j - 1}}$ for $1 \leq i,j \leq l$.  
Under the assumption  that $\iprod{S^{j}}{\Ecal^{j}[S^{j}] - Y^{j}} \neq 0$, $j = k-l, \ldots, k - 1$, it follows  from  \cite[Theorem 5.1]{byrd1994representations} that the matrix $F^{k,m} - (S^{k,m})^* \Sigma^{k,m}$ is invertible and the LSR1 gives 
    \be\label{l-sr1}
    \Ecal^k[U] = \Ecal_0^k[U] +   \dmat\left(N^{k,m} \left(F^{k,m} - (S^{k,m})^* \Sigma^{k,m}\right)^{-1} (N^{k,m})^*\dvec(U)\right), 
    \ee
    where $U \in \C^{n \times p}$ is any direction and $N^{k,m} = Y^{k,m} - \Sigma^{k,m}$. In the practical implementation, we  skip the update if 
    \[ \left | \iprod{S^{j}}{\Ecal^{j}[S^{j}] - Y^{j}} \right| \leq r\| S^j\|_\Fsf \| \Ecal^{j}[S^{j}] - Y^{j} \|_\Fsf \] 
    with small number $r$, say $r = 10^{-8}$. Similar idea can be found in \cite{NocedalWright06}.

      	\subsection{Limited-memory Nystr\"om  approximation {of $\Ecal_0^k$}}
      A good initial guess to the exact Hessian is also important to accelerate the convergence of the limited-memory quasi-Newton method. Here, we assume that a good initial approximation ${\Ecal_0^k}$ of the expensive part of the Hessian $\He(X^k)$ is known but {its computational cost is still very high}. We conduct how to use the {limited-memory Nystr\"om  approximation} to construct 
	another approximation with {lower} computational cost based on ${\Ecal_0^k}$. 
      	    
	    Specially, let $\Omega$ be a matrix whose columns form an orthogonal basis of a well-chosen subspace $\Sfrak$ and denote {$W = \Ecal_0^k [\Omega]$}. 
      	To reduce the computational cost and keep the good property of ${\Ecal_0^k}$, we construct the following approximation 
      	\be \label{eq:nystrom} 
	   {\hat{\Ecal}_0^k [U]} \coloneqq W(W^*\Omega)^\dag W^* {U}, \ee
	  {where $U \in \C^{n \times p}$ is any direction}.
      This is called {the limited-memory} Nystr\"om approximation; {see}  \cite{tropp2017fixed} and references therein {for more details}. By choosing the dimension of the subspace $\Sfrak$ {properly}, the rank of  {$W(W^* \Omega)^\dag W^*$} can be small {enough such that the} computational cost {of $\hat{\Ecal}_0^k [U]$ is} significantly reduced. {Furthermore}, we still want ${\hat{\Ecal}_0^k}$ {to satisfy the secant condition \eqref{eq:secant} as ${\Ecal_0^k}$ does.}  {More specifically, we need to  seek the subspace $\Sfrak$ such that} the secant condition     
      	\[ {\hat{\Ecal}_0^k} [S^k] = Y^k\]
	holds. 
      	{To this aim}, the subspace $\Sfrak$ can be chosen as $$\mathrm{span} \{ X^{k-1}, X^k\},$$ which contains the element $S^k$. By assuming that $\Ecal_0^k[UV] = \Ecal_0^k[U]V$ for any matrices $U,V$ with proper dimension (this condition is satisfied when $\Ecal_0^k$ is a matrix), we have $\hat{\Ecal}_0^k$ will satisfy the secant condition whenever $\Ecal_0^k$ does. 
	From the methods for linear eigenvalue computation in \cite{knyazev2001toward} and \cite{liu2013limited}, the subspace {$\Sfrak$} can also be decided as 
      	\be \label{eq:aug-sub} \mathrm{span} \{X^{k-1},X^k, \Ecal_0^k [X^{k}] \} \quad \mathrm{{or}} \quad  \mathrm{span} \{ X^{k-h}, \ldots, X^{k-1}, X^k \}\ee
      	with small memory length $h$. Once the subspace is defined, we can obtain the {limited-memory} Nystr\"om approximation by computing the $\Ecal_0^k[\Omega]$ once and {the pseudo} inverse of a small {scale} matrix. 
  \subsection{A structured quasi-Newton method with subspace refinement}
  Based on the theory of quasi-Newton method for unconstrained optimization, we
  know that algorithms which set the solution of \eqref{prob:struct-QN} as the
  next iteration point may not converge if no proper requirements on
  approximation $\Bcal^k$ or the regularization parameter $\tau_k$. Hence, we update
  the regularization parameter here using a trust-region-like strategy.
  Refereeing to \cite{hu2018adaptive}, we compute a trial point $Z^k$ by utilizing a modified CG method to solve the subproblem inexactly, which is to solve the Newton equation of \eqref{prob:struct-QN} at $X^k$ {as}
  \be \label{eq:newtoneq} \grad m_k(X^k) + \Hess m_k(X^k)[\xi] = 0, \quad \xi \in T_{X^k} , \ee
  where $\grad m_k(X^k) = \grad f(X^k)$ and $\Hess m_k(X^k)$ are given in \eqref{eq:hess-stiefel}. 
   After obtaining {the} trial point $Z^k$ of \eqref{prob:struct-QN},  {we calculate} the ratio between the predicted reduction and the actual reduction
  \be \label{eq:ratio} r_k = \frac{ f(Z^k) - f(X^k)} {
   m_k(Z^k) }.\ee
    If $r_k \ge \eta_1 > 0$, then the iteration is successful and we set
   $X^{k+1}= Z^k$; otherwise, the iteration is unsuccessful and we set $X^{k+1}=
   X^k$, that is,
   \be \label{eq:update-x}
   X^{k+1} = \begin{cases} Z^k, & \mbox{ if } r_k \ge \eta_1, \\
     X^k, & \mbox{ otherwise}.
   \end{cases}
   \ee
   The regularization parameter $\tau_{k+1}$ is updated as
   \be \label{alg:tau-up} \tau_{k+1} \in \begin{cases} (0, \gamma_0 \tau_k], & \mbox{  if }
     r_k \ge \eta_2, \\ [\tau_k,\gamma_1 \tau_k], & \mbox{  if } \eta_1 \leq r_k
     < \eta_2, \\ [\gamma_1 \tau_k, \gamma_2 \tau_k], & \mbox{  otherwise}{,} \end{cases} \ee
    where $0<\eta _1 \le \eta _2 <1 $ and $0 < \gamma_0 < 1<  \gamma _1 \le \gamma _2 $.   These parameters  determine how aggressively the
    regularization parameter is decreased when an iteration is successful or it is
    increased when an iteration is unsuccessful.   In practice, the performance of the
    regularized trust-region algorithm is not very sensitive to the  values of the parameters. 
    
    Noticing that the Newton-type method may still be very slow when the Hessian
    is close to be singular \cite{byrd2004convergence}. Numerically, it may
    happen that the regularization parameter turns to be huge  and the
    Riemannian Newton direction is nearly parallel to the negative gradient
    direction. Hence, it leads to an update $X^{k+1}$ belonging to the subspace
    $\tilde{\Sfrak}^k := \mathrm{span}\{X^k, \grad f(X^k)\}$, which is similar
    to the Riemannian gradient approach. To overcome this issue, we propose an optional step of solving \eqref{prob:gen} restricted to a subspace. Specifically, at $X^k$, we construct a
    subspace $\Sfrak^k$ with an orthogonal basis $Q^k \in \C^{{n\times q}} ({p \leq q
    \leq n})$, where $q$ is the dimension of $\Sfrak^k$. Then any point $X$ in
    the subspace $\Sfrak^k$ can be represented by 
    \[ X = Q^k M \]
    for some $ M \in \C^{{q \times p}}$.   
    Similar to the constructions of linear eigenvalue problems in \cite{knyazev2001toward} and \cite{liu2013limited}, the subspace $\Sfrak^k$ can be decided by using the history information $\{X^{k}, X^{k-1}, \ldots \}$, $\{ \grad f(X^{k}), \grad f(X^{k-1}),  \ldots \}$ and other useful information. 
    Given the subspace $\Sfrak^k$, the subspace method aims to find a solution of \eqref{prob:gen} with an extra constraint $X \in \Sfrak^k$, namely,
    \be \label{prob:subspace} 
    \min_{M \in \C^{{q \times p}}} \quad f(Q^kM) \quad \st \quad  M^*M = I_p.  \ee
    The problem \eqref{prob:subspace} can be solved {inexactly} by existing methods for optimization with orthogonality constraints.
    Once a good approximate solution $M^k$ of \eqref{prob:subspace} is obtained, then we {update} $X^{k+1} = Q^{k}M^{k}$ which is an approximate minimizer in the subspace $\Sfrak^k$ instead of $\tilde{\Sfrak}^k$. This 
    completes one step of the subspace iteration. 
    In fact, we
    compute the ratios between the norms of the Riemannian gradient of the
    last few iterations.  If all of these ratios are almost 1, we infer that
    the current iterates stagnates and the subspace method is called. 
    Consequently, our algorithm framework is outlined in Algorithm \ref{alg:struct-QN}.       
      \begin{algorithm2e}[!htbp]
      \caption{A structured quasi-Newton method with subspace refinement} 
      \label{alg:struct-QN}
      Input initial guess $X^0 \in \C^{n\times p}$ with $(X^0)^*X^0 = I_p$ and {the memory length $m$}.\\
      Choose $\tau_0>0$, $0<\eta _1 \le \eta _2 <1 $, $1<  \gamma _1 \le \gamma _2 $.  Set $k = 0$.
      \\
      \While{stopping conditions not met}
      {
      {Choose $\Ecal_0^k$ (use the limited-memory Nystr\"om  approximation if necessary)}.\\ 
      Construct the approximation {$\Bcal^k$} {via  \eqref{eq:struct-qn} and \eqref{l-sr1}}.\\
      {Construct} the subproblem \eqref{prob:struct-QN} and use the modified CG method (Algorithm 2 in \cite{hu2018adaptive}) to compute a new trial point $Z^k$. \\
      Compute the ratio $r_k$ via \eqref{eq:ratio}. \\
      Update $X^{k+1}$ from the trial point $Z^k$ based on \eqref{eq:update-x}. \\
      Update $\tau_{k}$ according to \eqref{alg:tau-up}. \\
      $k\gets k+1$.\\
      \If{{stagnate} conditions met}
      { Solve the subspace problem \eqref{prob:subspace} to update $X^{k+1}$.\\}
      }
    \end{algorithm2e}      
 \section{Convergence analysis} \label{sec:convergenceanalysis} 
 In this section, we present the convergence property of Algorithm
 \ref{alg:struct-QN}. 
 To guarantee the global convergence and fast local convergence rate, the inexact conditions for the subproblem \eqref{prob:struct-QN} (with quadratic or cubic regularization) can be chosen as 
 \bea \label{eq:inexact-1} m_k(Z^k)  & \leq -c\|\grad f(X^k)\|_{{\Fsf}}^2  \\
  \label{eq:inexact-2} \| \grad m_k(Z^k)\|_{{\Fsf}} & \leq \theta^k \| \grad f(X^k) \|_{{\Fsf}}\eea  
 with some positive constant $c$ and $\theta^k:= \min\{1, \|\grad f(X^k) \|_{{\Fsf}} \}$.  Here, the inequality \eqref{eq:inexact-1} is to guarantee the global convergence and the inequality \eqref{eq:inexact-2} leads to fast local convergence. Throughout the analysis of convergence, we assume that the stagnate conditions are never met. (In fact, a sufficient decrease for the original problem in each iteration can be guaranteed from the description of subspace refinement. Hence, the global convergence still holds.) 
 \subsection{Global convergence}
 Since the regularization term is used, the global convergence of our method can
 be obtained by assuming the boundedness on the constructed Hessian
 approximation. We first make the following assumptions. 
  \begin{assumption} \label{assum:global} Let $\{X^k\}$ be generated by
    Algorithm \ref{alg:struct-QN} without subspace refinement. We assume:
 \begin{itemize}
 \item[{(A1)}] The gradient $\nabla f $ is Lipschitz continuous on the convex hull of $\mathrm{St}(n,p)$, i.e., there exists $L_{f} > 0$ such that
 \end{itemize}
 \[ \|\nabla f(X) - \nabla f(Y)\|_{{\Fsf}} \leq L_{f}\| X - Y\|_{{\Fsf}}, \quad \forall~X,Y \in \conv({\mathrm{St}(n,p)}).\]
 \begin{itemize}
 \item[{(A2)}] There exists $\kH > 0$ such that $\|\Bcal^k\| \leq \kH$ for all $k \in \N$, where $\| \cdot \|$ is the operator norm introduced by the Euclidean inner product.
 \end{itemize}
 \end{assumption}

\begin{remark}
	By Assumption (A1),  $\nabla f(X)$ is uniformly bounded by some constant $\kappa_g$ on the compact set $\conv(\mathrm{St}(n,p))$, i.e., 
	$$ \|\nabla f(X)\|_{{\Fsf}} \leq \kg, \; X \in \conv(\mathrm{St}(n,p)). $$
	Assumption (A2) is often used in the traditional symmetric rank-1 method \cite{byrd1996analysis} which appears to be reasonable in practice. 
\end{remark}

Based on the similar proof in \cite{hu2018adaptive,wen2013adaptive}, we have the following theorem for global convergence.
\begin{theorem}
	Suppose that Assumptions (A1)-(A2) and the inexact conditions \eqref{eq:inexact-1} hold. Then, either 
	\be \grad f(X^t) = 0~for ~some ~t >0 \quad or \quad \lim_{k \rightarrow \infty} \|\grad f(X^k)\|_{{\Fsf}} = 0.\ee
\end{theorem}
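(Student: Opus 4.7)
The plan is to adapt the standard global-convergence argument for adaptive-regularization methods (as developed in \cite{hu2018adaptive,wen2013adaptive}) to the structured quasi-Newton Stiefel setting here. The three workhorse ingredients are: (i) a sufficient-decrease lower bound on $|m_k(Z^k)|$ coming from the inexact condition \eqref{eq:inexact-1}; (ii) an upper bound of the form $\|Z^k-X^k\|_{\Fsf}\leq C_1 \|\grad f(X^k)\|_{\Fsf}/\tau_k$ whenever $\tau_k$ is large relative to $\kH$, which the modified CG solver from \cite{hu2018adaptive} delivers because the Riemannian Hessian \eqref{eq:hess-stiefel} is then dominated by the $\tau_k\xi$ shift; and (iii) a uniform upper bound $\tau_k \leq \tau_{\max}$, which I would derive from (i) and (ii) together with Assumption (A1).

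First I would combine (A1) and the uniform bound $\|\Bcal^k\|\leq \kH$ from (A2) with a standard Taylor-type estimate on the compact convex set $\conv(\mathrm{St}(n,p))$ to obtain
\[
\bigl|f(Z^k)-f(X^k)-m_k(Z^k)\bigr| \leq C_2 \|Z^k-X^k\|_{\Fsf}^{2},
\]
with $C_2$ depending only on $L_f$, $\kH$, and the regularization form. Feeding (i) and (ii) into this estimate yields $|r_k-1|\leq C_3/\tau_k$, so once $\tau_k$ exceeds some threshold $\tau^{\star}$ the step is very successful ($r_k\geq \eta_2$) and the update rule \eqref{alg:tau-up} forces $\tau_{k+1}\leq \gamma_0 \tau_k$. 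Consequently $\tau_k$ cannot grow past $\max\{\tau_0,\gamma_2\tau^{\star}\}$, which is (iii). As a byproduct, infinitely many iterations must be successful unless the inexact subproblem already produces $\grad f(X^t)=0$ at some finite $t$, since the only alternative is a tail of unsuccessful iterations driving $\tau_k\to\infty$ in contradiction with (iii).

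With $\tau_k\leq \tau_{\max}$ in hand, the remainder is a standard telescoping argument. For each successful index $k$, \eqref{eq:update-x} together with \eqref{eq:inexact-1} gives
\[
f(X^{k+1})-f(X^k)\leq \eta_1 \,m_k(Z^k)\leq -\eta_1 c\,\|\grad f(X^k)\|_{\Fsf}^{2},
\]
and since $f$ is continuous on the compact set $\mathrm{St}(n,p)$ (hence bounded below), summing over successful iterations forces $\|\grad f(X^k)\|_{\Fsf}\to 0$ along that subsequence; constancy of $X^k$ across unsuccessful iterations then extends the convergence to the full sequence. The main obstacle I anticipate is step (ii): controlling the length of the trial step produced by the inexact CG solver on the manifold, where the tangent-space projection $\proj_{X^k}$ and the retraction used to return $Z^k$ to $\mathrm{St}(n,p)$ both complicate the naive Euclidean bound. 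Fortunately these step-size estimates were already derived in \cite{hu2018adaptive} for exactly the same adaptive regularization framework, and they rely only on the uniform Hessian bound (A2), not on the structured form \eqref{eq:struct-qn} of $\Bcal^k$; the proof therefore reduces to invoking those estimates essentially verbatim.
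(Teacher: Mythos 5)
Your proposal is correct and follows essentially the same route as the paper: the paper's own proof bounds $\|\Hess m_k(X^k)\|$ by $\kH+\kg+\tau_k$ using (A1)--(A2) and then defers the remaining steps (descent property of the modified CG step, boundedness of $\tau_k$, and the telescoping sum over successful iterations driven by \eqref{eq:inexact-1}) to the arguments in \cite{hu2018adaptive,wen2013adaptive}, which are exactly the ingredients (i)--(iii) you spell out. Your outline simply makes explicit what the paper cites by reference, so no substantive difference or gap.
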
 
\begin{proof}
	For the quadratic regularization \eqref{eq:quad-dist-stiefel}, let us note that the Riemannian Hessian $\Hess m(X^k)$ can be guaranteed to be bounded from Assumption \ref{assum:global}. In fact, from \eqref{eq:hess-stiefel}, we have 
	\[ \|\Hess m_k(X^k) \| \leq \|\Bcal^k\| + \|X^k\| \|\nabla f(X^k)\|_{{\Fsf}} + \tau_k \leq \kH + \kg + \tau_k, \]
	where $\|X^k\| = 1$ because of its unitary property. 
	Hence, we can guarantee that the direction obtained from the modified CG
    method is a descent direction via similar techniques in \cite[Lemma 7]{hu2018adaptive}. Then the convergence of the iterates $\{X^k\}$ can be proved in a similar way by following the details in \cite{hu2018adaptive} for the quadratic regularization. As to the cubic regularization, we can refer \cite[Theorem 4.9]{wen2013adaptive} for a similar proof. 
\end{proof}
 
 \subsection{Local convergence}
	We now  focus on the local convergence with the inexact conditions
    \eqref{eq:inexact-1} and \eqref{eq:inexact-2}. We make some necessary
    assumptions below.
	\begin{assumption}
		Let $\{ X^k\}$ be the sequence generated by Algorithm
        \ref{alg:struct-QN} without subspace refinement. We assume
		\begin{itemize} 
			\item[(B1)] The sequence $\{X^k\}$ converges to $X_*$ with $\grad f(X_*) = 0$. 
			\item[(B2)] The Euclidean Hessian $\nabla^2 f$ is continuous on conv$(\mathrm{St}(n,p))$. 
			\item[(B3)] The Riemannian Hessian $\Hess f(X)$ is positive definite at $X_*$.  
			\item[(B4)] The Hessian approximation ${\Bcal^k}$ satisfies
			\be \label{eq:hessapprox} \frac{\|({\Bcal^k} - \nabla^2 f(X^k))[Z^k - X^k] \|_{{\Fsf}}}{\|Z^k - X^k \|_{{\Fsf}}} \rightarrow 0, ~ k \rightarrow \infty.  \ee
		\end{itemize} 
	\end{assumption}  
	
	Following the proof in \cite[Lemma 17]{hu2018adaptive}, we show that all iterations are eventually very successful (i.e., $r_k \geq \eta_2$, for all sufficiently large $k$) when Assumptions (B1)-(B4) and the inexact conditions \eqref{eq:inexact-1} and \eqref{eq:inexact-2} hold.
	\begin{lemma} \label{lemma:localconvergence}
		Let Assumptions (B1)-(B4) be satisfied. Then, all iterations
		are eventually very successful.
    \end{lemma}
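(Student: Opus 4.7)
My plan is to establish the stronger claim that $r_k \to 1$ as $k \to \infty$, from which $r_k \geq \eta_2$ for large $k$ is immediate since $\eta_2 < 1$. Because $m_k(\cdot)$ is exactly quadratic in $Z^k - X^k$ with Hessian $\mathcal{B}^k + \tau_k \mathrm{id}$, while a Euclidean second-order Taylor expansion of $f$ about $X^k$ (valid since the segment $[X^k, Z^k]$ lies in $\conv(\mathrm{St}(n,p))$ and $\nabla^2 f$ is continuous there by (B2)) produces a similar quadratic with Hessian $\nabla^2 f(X^k)$, subtracting the two gives
\begin{equation*}
f(Z^k) - f(X^k) - m_k(Z^k) = \tfrac{1}{2}\Re\iprod{(\nabla^2 f(X^k) - \mathcal{B}^k)[Z^k-X^k]}{Z^k-X^k} - \tfrac{\tau_k}{2}\|Z^k - X^k\|_\Fsf^2 + o(\|Z^k - X^k\|_\Fsf^2).
\end{equation*}
The Dennis--Mor\'e hypothesis (B4) together with Cauchy--Schwarz bounds the first term by $o(\|Z^k - X^k\|_\Fsf^2)$, and because $\tau_k \geq 0$ we obtain $f(Z^k) - f(X^k) - m_k(Z^k) \leq o(\|Z^k - X^k\|_\Fsf^2)$.

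Next I would show $\|Z^k - X^k\|_\Fsf = O(\|\grad f(X^k)\|_\Fsf)$ for all large $k$. Writing $Z^k = R_{X^k}(\tilde\xi^k)$ with $\tilde\xi^k$ the tangent direction returned by the modified CG solver, retraction regularity gives $Z^k - X^k = \tilde\xi^k + O(\|\tilde\xi^k\|_\Fsf^2)$. A Riemannian Taylor expansion of $\grad m_k$ at $X^k$, together with $\grad m_k(X^k) = \grad f(X^k)$ and the inexact condition \eqref{eq:inexact-2}, yields $\|\Hess m_k(X^k)[\tilde\xi^k]\|_\Fsf \leq 2\|\grad f(X^k)\|_\Fsf + o(\|\tilde\xi^k\|_\Fsf)$. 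For a matching lower bound, (B3) and continuity guarantee $\langle \Hess f(X^k)[\tilde\xi^k], \tilde\xi^k\rangle \geq (\lambda_*/2)\|\tilde\xi^k\|_\Fsf^2$ for $k$ large, where $\lambda_* > 0$ is the smallest eigenvalue of $\Hess f(X_*)$; transferring (B4) from the Euclidean step $Z^k - X^k$ to $\tilde\xi^k$ via $Z^k - X^k = \tilde\xi^k + O(\|\tilde\xi^k\|_\Fsf^2)$ and the boundedness of $\|\mathcal{B}^k\|$ supplied by (A2) then delivers $\langle \Hess m_k(X^k)[\tilde\xi^k], \tilde\xi^k\rangle \geq (\lambda_*/3)\|\tilde\xi^k\|_\Fsf^2$. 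Cauchy--Schwarz converts this into the operator lower bound $(\lambda_*/3)\|\tilde\xi^k\|_\Fsf \leq \|\Hess m_k(X^k)[\tilde\xi^k]\|_\Fsf$, and combining with the upper bound gives $\|\tilde\xi^k\|_\Fsf \leq C\|\grad f(X^k)\|_\Fsf$, hence $\|Z^k - X^k\|_\Fsf \leq C'\|\grad f(X^k)\|_\Fsf$.

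Combining with the sufficient-decrease condition \eqref{eq:inexact-1} then yields $|m_k(Z^k)| \geq c\|\grad f(X^k)\|_\Fsf^2 \geq c''\|Z^k - X^k\|_\Fsf^2$ for some $c''>0$. Using $m_k(Z^k) < 0$ and the bound from the first paragraph,
\begin{equation*}
r_k - 1 \;\geq\; -\frac{|f(Z^k) - f(X^k) - m_k(Z^k)|}{|m_k(Z^k)|} \;\geq\; -\frac{o(\|Z^k - X^k\|_\Fsf^2)}{c''\|Z^k - X^k\|_\Fsf^2} \;\longrightarrow\; 0,
\end{equation*}
so $r_k \geq 1 - o(1)$ and the lemma follows. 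The main obstacle is the step-size bound in the second paragraph: the Dennis--Mor\'e condition constrains $\mathcal{B}^k - \nabla^2 f(X^k)$ only along the step direction rather than in operator norm, so one cannot invoke uniform positive definiteness of $\Hess m_k(X^k)$ as an operator. The argument has to extract its positive definiteness from $\Hess f(X_*)$ via (B3), carefully transfer (B4) from the Euclidean step to the tangent direction $\tilde\xi^k$ using (A2), and handle the Riemannian Taylor expansion of $\grad m_k$ with the appropriate retraction and vector-transport bookkeeping.
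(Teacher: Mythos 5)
Your proposal is correct and takes essentially the same route as the paper's proof: bound the model error $f(Z^k)-f(X^k)-m_k(Z^k)$ by $o(\|Z^k-X^k\|_{\Fsf}^2)$ using the Dennis--Mor\'e condition (B4) together with continuity of $\nabla^2 f$, bound the step by $\|Z^k-X^k\|_{\Fsf}=O(\|\grad f(X^k)\|_{\Fsf})$ via (B3) and the inexact condition \eqref{eq:inexact-2}, and divide by the sufficient decrease \eqref{eq:inexact-1} to get $1-r_k\to 0$. The only cosmetic differences are that the paper phrases the step bound through the exponential map and \cite[Lemma 7.4.9]{AbsilBakerGallivan2007} while you use a general retraction, and that your explicit appeal to boundedness of $\Bcal^k$ makes precise what the paper uses implicitly in its $O(\|\eta^k\|_{\Fsf}^2)$ bookkeeping.
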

    \begin{proof}
    	From the second-order Taylor expansion, we have 
    	\[ f(Z^k) - f(X^k) - m_k(Z^k) \leq \frac{1}{2} \Re \iprod{(\nabla^2 f(X^k_\delta) - {\Bcal^k})[Z^k - X^k])}{Z^k - X^k}, \]	
    	for some suitable $\delta_k \in [0,1]$ and $X^k_{\delta}:= X^k + \delta_k (Z^k - X^k)$. Since the Stifel manifold is compact, there exist some $\eta^k$ such that $Z^k = \mathrm{Exp}_{X^k}(\eta^k)$ where $\mathrm{Exp}_{X^k}$ is the exponential map from $T_{X^{k}} \mathrm{St}(n,p)$ to $\mathrm{St}(n,p)$. Following the proof in \cite[Appendix B]{boumal2016global} and Assumption (B1) ($Z^k$ can be sufficiently close to $X^k$ for large $k$), we have
    	\be \label{eq:sktk} \|Z^k - X^k - \eta^k\|_{{\Fsf}} \leq \kappa_1 \|\eta^k\|_{{\Fsf}}^2 \ee
    	with a positive constant $\kappa_1$ for all sufficiently large $k$. 
    	Moreover, since the Hessian $\Hess f(X_*)$ is positive definite and (B4)
        is satisfied, it holds for sufficiently large $k$:
    	\[ \begin{aligned}
    	\|\Hess &  m_k(X^k)[\eta^k] \|_{\Fsf} = \|\Hess m_k(X^k) [Z^k - X^k] \|_{\Fsf} +  O( \|\eta^k\|_{\Fsf}^2) \\
    	={} &  \| \Hess f(X^k)[Z^k - X^k] + (\Hess m_k(X^k) - \Hess f(X^k))[Z^k - X^k] \|_{\Fsf} + O( \|\eta^k\|_{\Fsf}^2)  \\
    	\geq{} & \lambda_{\min}(\Hess f(X^k)) \|Z^k - X^k\|_{\Fsf} + o(\|Z^k -X^k\|_{\Fsf}) + O( \|\eta^k\|_{\Fsf}^2) \\
    	\geq{} & \lambda_{\min}(\Hess f(X^k)) \|\eta^k\|_{\Fsf} + o( \|\eta^k\|_{\Fsf}),
    	\end{aligned}
    	\]
	    where $\lambda_{\min} (\Hess f(X^k))$ is the minimal spectrum of $\Hess f(X^k)$.  
    	From Assumption (B2)-(B3), \cite[Proposition 5.5.4]{opt-manifold-book} and the Taylor expansion of $m_k \circ \mathrm{Exp}_{X^k}$, we have
    	\bee \| \grad (m_k \circ \mathrm{Exp}_{X^k})(\eta^k) - \grad f(X^k) \|_{{\Fsf}} = \| \Hess f(X^k)[\eta^k] \|_{\Fsf} + o( \|\eta^k\|_{\Fsf}) 
    	\geq \frac{\kappa_2}{2} \|\eta^k\|_{{\Fsf}}  , \eee
    	where $\kappa_2 := \lambda_{\min}(\Hess f(X_*))$.
        By \cite[Lemma 7.4.9]{AbsilBakerGallivan2007}, we have 
    	\be \label{eq:gktk} \| \eta^k \|_{{\Fsf}} \leq \frac{2}{\kappa_2} (\|\grad f(X^k)\|_{{\Fsf}} + \tilde{c}\|\grad m_k(Z^k)\|_\Fsf \leq \frac{2(1 + \tilde{c} \theta^k)}{\kappa_2} \|\grad f(X^k)\|_{{\Fsf}}, \ee
    	where $\tilde{c} >0$ is a constant and the second inequality is from the inexact condition \eqref{eq:inexact-2}.
    	It follows from the continuity of $\nabla^2 f$, \eqref{eq:inexact-1}, \eqref{eq:sktk} and \eqref{eq:gktk} that
    	\beaa 1 - r_k \leq & \frac{1}{2c} \left( \frac{\|(\nabla^2 f(X^k) - {\Bcal^k})[Z^k - X^k]\|_{{\Fsf}}\|Z^k - X^k\|_{{\Fsf}} }{\|\grad f(X^k)\|_{{\Fsf}}^2} \right. \\
    	& \left. + \frac{\| \nabla^2 f(X^k_\delta) - \nabla^2 f(X^k) \| \|Z^k - X^k\|_{\Fsf}^2}{\|\grad f(X^k)\|_{{\Fsf}}^2} \right)
    	\rightarrow 0.\eeaa
    	Therefore, the iterations are eventually very successful. 
    \end{proof}	
	As a result, the q-superlinear convergence can also be guaranteed. 
	\begin{theorem} \label{thm:localconvergence}
		Suppose that Assumptions (B1)-(B4) and conditions \eqref{eq:inexact-1} and \eqref{eq:inexact-2} hold. Then the sequence $\{X^k\}$ converges q-superlinearly to $X_*$.
	\end{theorem}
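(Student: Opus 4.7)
The plan is to combine Lemma~\ref{lemma:localconvergence} with a Riemannian Dennis--Moré argument. By the lemma, for all sufficiently large $k$ the iteration is very successful, so $X^{k+1}=Z^k=\mathrm{Exp}_{X^k}(\eta^k)$, and the update rule \eqref{alg:tau-up} forces $\tau_{k+1}\le \gamma_0\tau_k$, hence $\tau_k\to 0$. The key intermediate claim is the asymptotic Newton identity
\[
\|\grad f(X^k)+\Hess f(X^k)[\eta^k]\|_{\Fsf} = o(\|\grad f(X^k)\|_{\Fsf}),
\]
from which standard Riemannian Newton convergence theory (see, e.g., \cite[Thm.~6.3.2]{opt-manifold-book}) delivers $\dist(X^{k+1},X_*)=o(\dist(X^k,X_*))$.

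I would establish the identity in two steps. First, a Taylor expansion of $m_k\circ\mathrm{Exp}_{X^k}$ along the geodesic gives
\[
\grad(m_k\circ\mathrm{Exp}_{X^k})(\eta^k)=\grad f(X^k)+\Hess m_k(X^k)[\eta^k]+o(\|\eta^k\|_{\Fsf}),
\]
using $\grad m_k(X^k)=\grad f(X^k)$. Combined with the inexact condition \eqref{eq:inexact-2} and $\theta^k\to 0$, this yields
\[
\|\grad f(X^k)+\Hess m_k(X^k)[\eta^k]\|_{\Fsf}\leq \theta^k\|\grad f(X^k)\|_{\Fsf}+o(\|\eta^k\|_{\Fsf}).
\]
Second, I would replace $\Hess m_k(X^k)$ by $\Hess f(X^k)$ via \eqref{eq:hess-stiefel}:
\[
(\Hess m_k(X^k)-\Hess f(X^k))[\eta^k]=\proj_{X^k}\bigl((\Bcal^k-\nabla^2 f(X^k))[\eta^k]\bigr)+\tau_k\eta^k.
\]
Assumption (B4) together with \eqref{eq:sktk} (which bridges $\eta^k$ and $Z^k-X^k$) gives $\|(\Bcal^k-\nabla^2 f(X^k))[\eta^k]\|_{\Fsf}=o(\|\eta^k\|_{\Fsf})$; since $\proj_{X^k}$ is nonexpansive and $\tau_k\to 0$, the full difference is $o(\|\eta^k\|_{\Fsf})$. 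Invoking the linear bound \eqref{eq:gktk} between $\|\eta^k\|_{\Fsf}$ and $\|\grad f(X^k)\|_{\Fsf}$ then produces the displayed asymptotic Newton identity.

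To convert the Newton identity into q-superlinear convergence, I would exploit Assumptions (B2)--(B3): positive definiteness of $\Hess f(X_*)$ and continuity of $\nabla^2 f$ provide a two-sided error bound $\|\grad f(X^k)\|_{\Fsf}=\Theta(\dist(X^k,X_*))$ near $X_*$, and a Taylor expansion of $\grad f$ along the geodesic from $X_*$ to $X^k$ turns the asymptotic Newton identity into $\dist(X^{k+1},X_*)=o(\dist(X^k,X_*))$, which is precisely q-superlinear convergence on $\mathrm{St}(n,p)$.

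The main obstacle I anticipate is transferring the Dennis--Moré-type hypothesis (B4), which is stated on the ambient difference $Z^k-X^k$, into an estimate for $\Hess m_k(X^k)$ acting on the tangent vector $\eta^k$. Three perturbations must be controlled at once: the geodesic-vs-chord gap \eqref{eq:sktk}, the projection together with the shift $\xi\,\sym(X^{k*}\nabla f(X^k))$ hidden in \eqref{eq:hess-original}--\eqref{eq:hess-stiefel}, and the regularization $\tau_k I$. All three are $o(\|\eta^k\|_{\Fsf})$---the second because $\nabla f(X^k)\to\nabla f(X_*)$ and $X^{k*}\nabla f(X^k)\to\sym$-zero on the tangent space, and the third because $\tau_k\to 0$---so they do not spoil the Dennis--Moré estimate, but verifying this bookkeeping is the delicate part of the argument.
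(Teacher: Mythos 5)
Your proposal is correct and rests on the same three ingredients as the paper's proof --- eventual very-successfulness with $\tau_k\to 0$, the inexact condition \eqref{eq:inexact-2} with $\theta^k\to 0$, the Dennis--Mor\'e condition (B4), and the linear bound between step length and gradient norm --- but it packages them differently. The paper never forms a Newton identity on the tangent vector $\eta^k$: it stays in the ambient chord variable $\Delta^k=Z^k-X^k$, compares the two projections $\grad m_k(X^{k+1})=\proj_{X^{k+1}}(\nabla f(X^k)+\Bcal^k[\Delta^k]+\cdots)$ and $\grad f(X^{k+1})=\proj_{X^{k+1}}(\nabla f(X^k)+\nabla^2 f(X^k)[\Delta^k]+o(\|\Delta^k\|_{\Fsf}))$ directly, and concludes $\|\grad f(X^{k+1})\|_{\Fsf}/\|\grad f(X^k)\|_{\Fsf}\to 0$, which is then converted to $\dist(X^{k+1},X_*)=o(\dist(X^k,X_*))$ by \cite[Lemma 7.4.8]{opt-manifold-book} --- essentially your two-sided bound $\|\grad f(X^k)\|_{\Fsf}=\Theta(\dist(X^k,X_*))$. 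Your route through the pullback $m_k\circ\mathrm{Exp}_{X^k}$ and the Riemannian Newton theorem is conceptually cleaner but buys you the extra bookkeeping you flag: transferring (B4), which is stated on $\Delta^k$, to the tangent vector $\eta^k$ needs not only \eqref{eq:sktk} but also a uniform operator-norm bound on $\Bcal^k-\nabla^2 f(X^k)$ to control $(\Bcal^k-\nabla^2 f(X^k))[\Delta^k-\eta^k]$; this is mild (it is implicit in (A2) plus (B2)) but is not among (B1)--(B4), and the paper's chord-variable formulation avoids the issue entirely. One small correction: the shift term $\xi\,\sym((X^k)^*\nabla f(X^k))$ appears identically in $\Hess m_k(X^k)$ and $\Hess f(X^k)$ and cancels exactly in their difference, so you do not need the separate argument that it tends to zero; only the projection of $(\Bcal^k-\nabla^2 f(X^k))[\eta^k]$ and the $\tau_k\eta^k$ term survive.
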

	
	\begin{proof}
		We consider the cubic model here, while the local q-superlinear convergence of quadratic model can be showed by a similar fashion. Since the iterations are eventually very successful, we have $X^{k+1} = Z^k$ and $\tau_k$ converges to zero. From \eqref{eq:inexact-2}, we have
		\be \label{eq:inexact1} 
		\begin{aligned} \left\| \grad m_k(X^{k+1}) \right\|_{{\Fsf}} ={} & \left\| \proj_{X^{k+1}}\left( \nabla f(X^k) + {\Bcal^k}[{\Delta^k}] + \tau_k\|{\Delta^k}\|_{{\Fsf}}{\Delta^k}  \right) \right\|_{{\Fsf}} \\ 
		\leq{} & \theta^k \|\grad f(X^k)\|_{{\Fsf}}, \end{aligned}  \ee
		where ${\Delta^k} = Z^k - X^k$. 
		Hence,
		\be \label{eq:inexact2} \begin{aligned}  &\| \grad  f(X^{k+1})
          \|_{{\Fsf}} \\
          ={} & \left\| \proj_{X^{k+1}}\left( \nabla f(X^{k+1}) \right) \right\|_{{\Fsf}} \\
			 ={}&  \left\| \proj_{X^{k+1}}\left( \nabla f(X^{k}) + \nabla^2 f(X^k)[{\Delta^k}] + o(\|{\Delta^k}\|_{{\Fsf}}) \right) \right\|_{{\Fsf}} \\
			 ={}& \left\| \proj_{X^{k+1}}\left( \nabla f(X^{k}) + {\Bcal^k}[{\Delta^k}] + o(\|{\Delta^k}\|_{{\Fsf}})  + (\nabla^2 f(X^k) - {\Bcal^k})[{\Delta^k}] \right) \right \|_{{\Fsf}} \\
			 \leq{} & \theta^k \|\grad f(X^k)\|_{{\Fsf}} + o(\|{\Delta^k}\|_{{\Fsf}}). 		
		\end{aligned}  \ee
	   It follows from a similar argument to \eqref{eq:gktk} that there exists some constant $c_1$  
		\[  \|{\Delta^k}\|_{{\Fsf}}  \leq c_1 \|\grad f(X^k)\|_{{\Fsf}},  \]
		for sufficiently large $k$. 
		Therefore, from \eqref{eq:inexact2} and the definition of $\theta^k$, we have 
		\be \label{eq:superlinear-grad} \frac{\| \grad f(X^{k+1})\|_{{\Fsf}}}{\|\grad f(X^k)\|_{{\Fsf}}} \rightarrow 0.  \ee
		Combining \eqref{eq:superlinear-grad}, Assumption (B3) and \cite[Lemma 7.4.8]{opt-manifold-book}, it yields
		\[ \frac{\dist(X^{k+1}, X_*)}{\dist(X^k, X_*)} \rightarrow 0, \]
		where $\dist(X,Y)$ is the geodesic distance between $X$ and $Y$ which belong to $\mathrm{St}(n,p)$. 
	    This completes the proof. 
      \end{proof}

\section{Linear eigenvalue problem} \label{section:EIG}
In this section, we apply the aforementioned strategy to the following linear eigenvalue problem 
\be \label{prob:lineig} \min_{X \in \R^{n \times p}}\ f(X):=\half \tr(X^\top C X) \quad \st \quad  X^\top X = I_p,  \ee
where $C := A + B$. Here, $A, B \in \R^{n \times n}$ are symmetric matrices and we assume that the multiplication of $BX$ is much more expensive than that of $AX$. 
Motivated by \brev{the} quasi-Newton \brev{methods} and eliminating the linear term in subproblem \eqref{prob:struct-QN}, we investigate the multisecant \brev{conditions} in \cite{gratton2007multi}
\be \label{eq:multisecant1-1} \hat{B}^k X^k = B X^k, \quad  \hat{B}^k S^k = BS^k \ee
with $S^k = X^k -X^{k-1}$. 
By a brief induction, we have an equivalent form of \eqref{eq:multisecant1-1}
\be \label{eq:multisecant1-2} \hat{B}^k[X^{k-1}, \, X^k] = B[X^{k-1}, \, X^k]. \ee
Then, using the limited-memory Nystr\"om approximation, we obtain the approximated matrix $\hat{B}^k$ as
\be \label{newapproxB-1} \hat{B}^k = W^k((W^k)^\top O^k)^\dag W_k^\top  ,\ee
where 
\be \label{eq:sub-lineig} O^k = \mathbf{orth}(\mathrm{span}\{X^{k-1}, \, X^k\}), ~\mathrm{and}~ W^k = BO^k. \ee Here, $\mathbf{orth} (Z)$ is to find the orthogonal basis of the space spanned by $Z$. Therefore, an approximation $C^k$ to $C$ can be set as 
\be \label{eq:ck-lin} C^k = A + \hat{B}^k .\ee
Since the objective function is invariant under rotation, i.e., $f(XQ) = f(X)$
for orthogonal matrix $Q \in \R^{p\times p}$, we also wants to construct a
subproblem whose objective function inherits the same property. Therefore, we
use the distance function between $X^k$ and $X$ as $$d_p(X,X^k) = \| XX^\top - X^k (X^k)^\top \|_{\brev{\Fsf}}^2, $$ which has been considered in \cite{EdelmanAriasSmith1999,thogersen:074103,yang2007trust} for the electronic structure calculation.  
Since $X^k$ and $X$ are orthonormal matrices, we have
\be \label{eq:regularization-lineig}\begin{aligned}
d_p(X,X^k) & = \tr((XX^\top-X^k (X^k)^\top)(XX^\top-X^k (X^k)^\top) )  \\
& = 2p - 2\tr(X^\top X^k (X^k)^\top X),
\end{aligned} \ee
which implies that $d_p(X,X^k)$ is a quadratic function on $X$.
Consequently, the subproblem can be constructed as 
\be \label{prob:sub-linear} \min_{X \in \R^{n \times p}} \ m_k(X) \quad \st \quad X^\top X = I_p,  \ee
where $$m_k(X):= \frac{1}{2} \tr(X^\top C^k X) + \frac{\tau_k}{4} d_p(X,X^k).$$ 
From the equivalent expression of $d_p(X,X^k)$ in
\eqref{eq:regularization-lineig}, problem \eqref{prob:sub-linear} is actually a
linear eigenvalue problem
\[ \begin{aligned}
(A + \hat{B}^k - \tau_k X^k(X^k)^\top)X & = X \Lambda, \\
X^\top X & = I_p, \\
\end{aligned} \]
where $\Lambda$ is a diagonal matrix whose diagonal elements are the $p$
smallest eigenvalues of $A + \hat{B}^k - \tau_k X^k(X^k)^\top$. Due to the low
computational cost of $A + \hat{B}^k - \tau_k X^k(X^k)^\top$ compared to $A+B$,
the subproblem \eqref{prob:sub-linear} can be solved efficiently using existing
eigensolvers. As in Algorithm \ref{alg:struct-QN}, we first solve  subproblem
\eqref{prob:sub-linear} to obtain a trial point and compute the ratio \eqref{eq:ratio} between the actual reduction and predicted reduction based on this trial point. Then the iterate and regularization parameter are updated according to \eqref{eq:ratio} and \eqref{alg:tau-up}.  
 Note that it is not necessary to solve the subproblems highly accurately in
 practice. 
 
\subsection{Convergence} 
Although the convergence analysis in section \ref{sec:convergenceanalysis} is based on
the regularization terms \eqref{eq:quad-dist-stiefel} and
\eqref{eq:cubic-dist-stiefel},  similar results can be established
with the specified regularization term $\frac{\tau_k}{4}d_p(X,X^k)$ using the
sufficient descent condition \eqref{eq:inexact-1}. It follows from the construction of $C^k$ in \eqref{eq:ck-lin} that 
\[ \|C\|_2 \leq \|A\|_2 + \|B\|_2, \quad \|C^k\|_2 \leq \|A \|_2 + \|B\|_2 \]
for any given matrices $A$ and $B$. Hence, Assumptions (A1) and (A2) hold with
$L_f = \kH = \|A\|_2 + \|B\|_2$. We have the following theorem on the global convergence.

\begin{theorem} \label{thm:lin-globalconvergence}
	Suppose that the inexact
    condition \eqref{eq:inexact-1} holds. Then, for the Riemannian gradients, either
	\[ \left(I_n-X^t(X^t)^\top\right) (C X^t) = 0 \mathrm{~for~some~} t>0 \; \;
    \mathrm{or} \;\; \lim_{k \rightarrow \infty} \| \left( I_n-X^k(X^k)^\top \right) (C X^k) \|_\Fsf = 0. \] 
\end{theorem}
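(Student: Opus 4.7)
The plan is to reduce the claim to the abstract global convergence result of Section~\ref{sec:convergenceanalysis} by verifying Assumptions (A1)--(A2) and then checking that the quadratic-regularization arguments there survive the replacement of $\|X-X^k\|_\Fsf^2$ by $\tfrac{1}{2}d_p(X,X^k)$. Since $\nabla f(X) = CX$, Lipschitz continuity of $\nabla f$ on $\conv(\mathrm{St}(n,p))$ is immediate with constant $\|C\|_2 \leq \|A\|_2+\|B\|_2$, giving (A1). For (A2), the Nystr\"om surrogate $\hat{B}^k$ in \eqref{newapproxB-1} satisfies $\|\hat{B}^k\|_2 \leq \|B\|_2$ because it is obtained from $B$ by an oblique projection onto $\mathrm{span}\{X^{k-1},X^k\}$; hence $C^k = A+\hat{B}^k$ is uniformly bounded by $\|A\|_2 + \|B\|_2$, so (A2) holds with $\kH = \|A\|_2+\|B\|_2$.

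The genuinely new element is the proximal term $\tfrac{\tau_k}{4}d_p(X,X^k)$. Starting from the Stiefel identity $d_p(X,X^k) = 2p - 2\tr(X^\top X^k(X^k)^\top X)$, I would compute $\nabla d_p(X,X^k) = -4X^k(X^k)^\top X$ and $\nabla^2 d_p(X,X^k)[\xi] = -4X^k(X^k)^\top \xi$, then apply \eqref{eq:hess-original} to obtain
\[
\Hess\!\left(\tfrac{\tau_k}{4}\,d_p(\cdot,X^k)\right)\!(X^k)[\xi] \;=\; \tau_k\bigl(I_n - X^k(X^k)^\top\bigr)\xi
\]
for every $\xi \in T_{X^k}\mathrm{St}(n,p)$. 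Because $f$ is invariant under right multiplication by $O(p)$, both $\grad f(X^k) = (I_n-X^k(X^k)^\top)CX^k$ and every modified-CG trial direction built from it lie entirely in the horizontal subspace $\mathrm{range}(I_n-X^k(X^k)^\top)$, on which the above operator acts as the scalar $\tau_k$. Consequently the regularization performs exactly the same spectrum shift as in \eqref{eq:hess-stiefel}, and one obtains the Riemannian Hessian bound $\|\Hess m_k(X^k)\| \leq \kH + \kg + \tau_k$ required by the convergence proof of \cite{hu2018adaptive}.

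With these two pieces in place, I would transcribe the argument scheme of \cite{hu2018adaptive} almost verbatim: the modified CG applied to $m_k$ produces a direction satisfying \eqref{eq:inexact-1}; the Lipschitz bound on $\nabla f$ together with the uniform bound on $C^k$ forces $1-r_k \to 0$ whenever $\tau_k$ is large, so $\{\tau_k\}$ remains bounded; the successful iterations then produce a telescoping sum $\sum_k \|\grad f(X^k)\|_\Fsf^2 < \infty$, which yields $\|\grad f(X^k)\|_\Fsf \to 0$ unless termination occurs at some iterate with $\grad f(X^t)=0$. Identifying $\grad f(X^k) = (I_n - X^k(X^k)^\top)CX^k$ then converts this into the stated alternative.

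The principal obstacle is the Riemannian Hessian calculation in the middle paragraph: one must confirm that the proximal term really shifts the spectrum by the full $\tau_k$ on the subspace that the algorithm actually explores, since the factor $I_n - X^k(X^k)^\top$ annihilates the vertical directions of the Stiefel tangent space. The resolution relies on the $O(p)$-invariance of $f$ and of the CG iteration, which together ensure the algorithm never produces components outside the horizontal subspace. Once this is pinned down, every other step of the Section~\ref{sec:convergenceanalysis} proof transfers with only notational changes.
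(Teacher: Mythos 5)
Your verification of (A1)--(A2) and your computation of $\Hess\bigl(\tfrac{\tau_k}{4}d_p(\cdot,X^k)\bigr)(X^k)$ are fine as far as they go, but they address the wrong bottleneck, and the step where you propose to ``transcribe the argument scheme of \cite{hu2018adaptive} almost verbatim'' is exactly where the proof breaks. In the quadratic-regularization argument, the bound $1-r_k\to 0$ for large $\tau_k$ rests on the proximal term forcing $\|Z^k-X^k\|_\Fsf = O(1/\tau_k)$, so that the model error (which is $O(\|Z^k-X^k\|_\Fsf^2)$) decays one order faster than the guaranteed model decrease $\tfrac{\nu}{\tau_k}\|\grad f(X^k)\|_\Fsf^2$. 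The term $\tfrac{\tau_k}{4}d_p(X,X^k)$ gives you no control whatsoever over $\|Z^k-X^k\|_\Fsf$: for $Z^k=X^kQ$ with $Q\in O(p)$, $Q\neq I_p$, one has $d_p(Z^k,X^k)=0$ while $\|Z^k-X^k\|_\Fsf$ can be of order $\sqrt{p}$. One must therefore work in the density-matrix variable, where the proximal term does yield $\|Z^k(Z^k)^\top-X^k(X^k)^\top\|_\Fsf\le 4\kH/\tau_k$. But then the model error contains the term $\iprod{C-C^k}{Z^k(Z^k)^\top-X^k(X^k)^\top}$, which is a priori only $O(1/\tau_k)$ --- the \emph{same} order as the model decrease --- so the ratio $1-r_k$ does not tend to zero as $\tau_k\to\infty$ and the trust-region argument stalls.

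The paper closes this gap with the multisecant/interpolation condition $\hat B^kX^k=BX^k$, i.e.\ $C^kX^k=CX^k$, which your proposal never invokes in the ratio argument. Writing $D=C-C^k$, $P=Z^k(Z^k)^\top$, $P_0=X^k(X^k)^\top$, the relations $DP_0=P_0D=0$ together with $P^2=P$ and $P_0^2=P_0$ give $\iprod{D}{P-P_0}=\iprod{D}{(P-P_0)^2}\le(L_f+\kH)\|P-P_0\|_\Fsf^2=O(1/\tau_k^2)$, restoring the extra order of decay needed to conclude $1-r_k\le 1-\eta_2$ for $\tau_k$ large. This interpolation property of the Nystr\"om surrogate at the current iterate is the essential ingredient of the proof, not the spectrum shift of the Riemannian Hessian; the latter point is moot here anyway, since subproblem \eqref{prob:sub-linear} is solved as a linear eigenvalue problem by an eigensolver rather than by the modified CG iteration that your horizontal-subspace discussion is tailored to.
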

\begin{proof}
 It can be guaranteed that the distance $d_p(X,X^k)$ is very small for
 a large enough regularization parameter $\tau_k$ by a similar argument to \cite[Lemma 9]{hu2018adaptive}.
	Specifically, the reduction of the subproblem requires that
	\[ \iprod{Z^k}{C^k Z^k} + \frac{\tau_k}{4} \| Z^k(Z^k)^\top - X^k (X^k)^\top \|_\Fsf^2 - \iprod{X^k}{C^k X^k} \leq 0 .\]
	From the cyclic property of the trace operator, it holds that
	\[ \iprod{C^k}{Z^k(Z^k)^\top - X^k (X^k)^\top} + \frac{\tau_k}{4} \| Z^k(Z^k)^\top - X^k (X^k)^\top \|_\Fsf^2 \leq 0.\]
	Then
	\be \label{eq:bounddiff} \| Z^k(Z^k)^\top - X^k (X^k)^\top \|_\Fsf \leq \frac{4 \kH}{\tau_k}. \ee
	From the descent condition \eqref{eq:inexact-1} for the subproblem, there exists some positive constant $\nu$ such that
	\be \label{eq:reduction-model} m_k(Z^k) - m_k(X^k) \geq - \frac{\nu}{\tau_k} \| \grad f(X^k) \|_{\Fsf}^2.\ee
	Based on the properties of $C^k$ and $C$, we have
	\be \label{eq:approx-err-model} \begin{aligned}
		f(Z^k) & - f(X^k) - (m_k(Z^k) - m_k(X^k)) \\ &
		= \iprod{Z^k}{C Z^k} - \iprod{Z^k}{C^k Z^k} - \frac{\tau_k}{4} \| Z^k(Z^k)^\top - X^k (X^k)^\top \|_\Fsf^2 \\
		& \leq \iprod{C - C^k}{Z^k (Z^k)^\top} = \iprod{C - C^k}{\left( Z^k (Z^k)^\top - X^k (X^k)^\top\right)^2 } \\
		& \leq (L_f + \kH) \| Z^k (Z^k)^\top - X^k (X^k)^\top \|_\Fsf^2 \\
		& \leq \frac{16 \kH^2 (L_f + \kH)}{\tau_k^2},
	\end{aligned}
    \ee
    where the second equality is due to $CX^k = C^k X^k$, the unitary $Z^k$ and $X^k$, as well as 
    $$\iprod{C-C^k}{Z^k (Z^k)^\top X^k (X^k)^\top} = \iprod{C-C^k}{X^k (X^k)^\top Z^k (Z^k)^\top} =0. $$ 
	Combining \eqref{eq:reduction-model} and \eqref{eq:approx-err-model}, we have that 
	\[ 1- r_k = \frac{f(Z^k) - f(X^k) - (m_k(Z^k) - m_k(X^k))}{m_k(X^k) - m_k(Z^k)} \leq 1 - \eta_2 \]
	for sufficiently large $\tau_k$ as in \cite[Lemma 8]{hu2018adaptive}. 
	Since the subproblem is solved with some sufficient reduction, the reduction
    of the original objective $f$ holds for large $\tau_k$ (i.e., $r_k $ is
    close to 1). Then the convergence of the norm of the Riemannian gradient
    $(I_n
    - X^k(X^k)^\top) CX^k$ follows in a similar fashion as \cite[Theorem 11]{hu2018adaptive}.   
\end{proof}	
The ACE method in \cite{lin2017convergence} needs an estimation  $\beta$
explicitly such that $B - \beta I_n $ is negative definite. By
 considering an equivalent matrix $(A+\beta I_n) + (B-\beta I_n)$, the convergence of
 ACE to a global minimizer is given. On the other hand, our algorithmic
 framework uses an  adaptive strategy to choose $\tau_k$ to guarantee the
 convergence to a stationary point. By using similar proof techniques in
 \cite{lin2017convergence}, one may also establish the convergence to
 a global minimizer.

\section{{Electronic structure calculation}}  \label{section:ks:hf}
{}
\subsection{Formulation}
{We now introduce the KS and HF total minimization models and present their 
gradient and Hessian of the objective functions in these two models.} 
After some proper discretization, the wave functions of $p$ occupied states can be approximated {by a matrix
$X = [x_1, \ldots, x_p] \in \mathbb{C}^{n\times p}$ with $X^*X = I_p$},
where $n$ corresponds to the spatial degrees of freedom. {The charge density associated  with the occupied states is defined as}
\[{\rho(X) = \diag(XX^*).}\]
Unless otherwise specified, we use the abbreviation  {$\rho$ for  $\rho(X)$}  in the following.   
The total energy functional {is defined as}
\be \label{prob:ks-obj} 
	E_{\ks}(X) := \frac{1}{4}\tr(X^*LX) + \half \tr(X^*V_{\ion}X) + \half \sum_l \sum_i \zeta_l |x_i^* w_l|^2 + \frac{1}{4} \rho^\top L^\dag \rho + \half e^\top \epsilon_{\xc}(\rho),
\ee
where $L$ is a discretized Laplacian
operator, $V_{\ion}$ is the constant ionic pseudopotentials, $w_l$ represents a
discretized pseudopotential reference projection function, ~$\zeta_l$ is a
constant whose value is $\pm 1$, $e$ is a vector of all ones in $\mathbb{R}^n$, and $\epsilon_{\xc}$ is related to the exchange correlation energy. Therefore, the KS total energy minimization problem can be expressed as
\be \label{prob:ks}  \min_{X \in \C^{n\times p}} \  E_{\ks}(X) \quad \st \quad X^*X = I_p. \ee  

Let $ \mu_{\xc}(\rho) = \frac{\partial \epsilon_{\xc}(\rho)}{\partial \rho}$ and
denote  the Hamilton $H_{\ks}(X)$ by
\be \label{eq:ham-ks} {H_{\ks}(X)} := \half L + V_{\ion} + \sum_l \zeta_l w_lw_l^* + {\Diag}((\Re L^\dag) \rho) + {\Diag}(\mu_{\xc}(\rho)^* e).\ee
Then the Euclidean gradient of $E_{\ks}(X)$ is computed as
\be \label{eq:grad-ks} {\nabla} E_{\ks}(X) = {H_{\ks}(X)}X. \ee
{Under the assumption that $\epsilon_{\xc}(\rho(X))$ is twice differentiable
with respect to $\rho(X)$,} {Lemma 2.1 in \cite{wen2013adaptive} gives an explicit form of the Hessian of $E_{\ks}(X)$ as}
\be \label{eq:hess-ks} {\nabla^2} E_{\ks}(X)[U] = {H_{\ks} (X)} U + {\Rcal(X)}[U]{,} \ee
where {$U \in \C^{n \times p}$} and {${\Rcal(X)}[U]:= \Diag\left( \big(\Re L^\dag + \frac{\partial^2 \epsilon_{\xc}}{\partial \rho^2}e\big)(\bar{X} \odot U + X \odot \bar{U})e\right) X$}.

Compared with KSDFT, {the} HF theory can provide a more accurate model to electronic structure calculations by involving the Fock exchange operator. After discretization, the exchange-correlation operator $\Vcal(\cdot) : \C^{n \times n} \rightarrow \C^{n\times n}$ is usually a fourth-order tensor, {see equations (3.3) and (3.4) in \cite{le2005computational} for details}.  {Furthermore, it is easy to see from \cite{le2005computational} that $\Vcal(\cdot)$ satisfies the following properties: 
 (i)  For  any $D_1, D_2 \in \C^{n \times n}$, there holds 
$\iprod{\Vcal(D_1)}{D_2} = \iprod{\Vcal(D_2)}{D_1}, $
which further implies that 
\be \label{equ:V:property} 
\iprod{\Vcal(D_1 + D_2)}{D_1 + D_2} = \iprod{\Vcal(D_1)}{D_1} + 2\iprod{\Vcal(D_1)}{D_2} + \iprod{\Vcal(D_2)}{D_2}.
\ee
(ii)  If $D$ is Hermitian, $\Vcal(D)$ is also Hermitian.} 
{Besides, it should be emphasized}  that computing {$\Vcal(U)$} is always very expensive since it needs to  perform the multiplication between a $n \times n \times n \times n $ fourth-order tensor and a $n$-by-$n$ matrix.   
The corresponding  {Fock energy} is defined as 
\be \label{eq:fock-obj} E_{\f}(X):= \frac{1}{4}\iprod{\Vcal(XX^*) X}{X} = \frac{1}{4}\iprod{\Vcal(XX^*)}{XX^*}. \ee 
Then the HF total energy minimization problem can be formulated as 
\be \label{prob:hf} \min_{X \in \C^{n\times p}} \quad E_{\hf}(X):= E_{\ks}(X) + E_{\f}(X) \quad \st \quad X^*X = I_p. \ee
{We now can  explicitly compute the gradient and Hessian of $E_{\f}(X)$ by using the properties of $\Vcal(\cdot)$}. 
\begin{lemma} \label{lemma:grad-hess}
Given $U \in \C^{n\times p}$, the gradient and the Hessian along $U$ of $E_{\f}(X)$ are, respectively, 
 \begin{eqnarray}
  \label{eq:fock-grad} \nabla E_{\f}(X) & = & {\Vcal(XX^*)}X,  \\
 \label{eq:fock-hess} \nabla^2 E_{\f}(X)[U] & = & {\Vcal(XX^*)}U + \Vcal(XU^* + UX^*) X. 
\end{eqnarray}
\end{lemma}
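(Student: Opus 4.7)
The plan is to Taylor-expand $E_\f(X+tU)$ in $t$ about $t=0$ and read off the first- and second-order coefficients, using only the bilinear-type identity \eqref{equ:V:property} and the Hermiticity property (ii) of $\Vcal(\cdot)$. First I will compute the perturbation of the density matrix: $(X+tU)(X+tU)^* = XX^* + tW + t^2\,UU^*$, where $W := XU^* + UX^*$. Note that both $XX^*$ and $W$ are Hermitian, so that by property (ii) both $\Vcal(XX^*)$ and $\Vcal(W)$ are Hermitian as well, which will let me freely commute conjugate transposes inside traces later.

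Next, setting $A = XX^*$ and $B = tW + t^2 UU^*$, I will apply \eqref{equ:V:property} to obtain
\[
4 E_\f(X+tU) \;=\; \iprod{\Vcal(A)}{A} + 2\iprod{\Vcal(A)}{B} + \iprod{\Vcal(B)}{B}.
\]
Collecting the coefficient of $t$ gives $\tfrac12\iprod{\Vcal(XX^*)}{W}$. Using $\iprod{C}{XU^*} = \tr(C^* X U^*) = \overline{\tr(U^* C^* X)}$ together with the Hermiticity of $\Vcal(XX^*)$, I will rewrite this as $\Re\iprod{\Vcal(XX^*)X}{U}$, which identifies $\nabla E_\f(X) = \Vcal(XX^*)X$, giving \eqref{eq:fock-grad}.

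For the $t^2$ contribution I collect two pieces: the cross term $\tfrac12\iprod{\Vcal(XX^*)}{UU^*}$, which simplifies to $\tfrac12\Re\iprod{\Vcal(XX^*)U}{U}$ by the same trace manipulation; and the quadratic term $\tfrac14\iprod{\Vcal(W)}{W}$. For the latter, Hermiticity of $\Vcal(W)$ and the symmetry $\iprod{\Vcal(D_1)}{D_2}=\iprod{\Vcal(D_2)}{D_1}$ from property (i) yield
\[
\tfrac14\iprod{\Vcal(W)}{W} \;=\; \tfrac14\bigl(\iprod{\Vcal(W)X}{U} + \overline{\iprod{\Vcal(W)X}{U}}\bigr) \;=\; \tfrac12\Re\iprod{\Vcal(W)X}{U}.
\]
Matching the sum $\tfrac12\Re\iprod{(\Vcal(XX^*)U+\Vcal(W)X)}{U}$ against the definition of the $\R$-Hessian via $E_\f(X+tU)=E_\f(X)+t\,\Re\iprod{\nabla E_\f(X)}{U}+\tfrac{t^2}{2}\Re\iprod{\nabla^2 E_\f(X)[U]}{U}+o(t^2)$ produces \eqref{eq:fock-hess}.

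The only real subtlety is the careful bookkeeping between the complex inner product $\iprod{\cdot}{\cdot}=\tr(\cdot^*\,\cdot)$ and its real part (which is what governs $\R$-differentiation), together with the repeated use of Hermiticity of $XX^*$ and $W$ to move factors of $X$ in and out of the $\Vcal$-argument. Once that bookkeeping is set up, everything else is immediate expansion; no symmetrization or polarization beyond \eqref{equ:V:property} is needed, since both occurrences of the Hessian directional are already the same $U$.
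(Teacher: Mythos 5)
Your proposal is correct and follows essentially the same route as the paper's proof: expand the perturbed density matrix, apply \eqref{equ:V:property}, use the Hermiticity of $\Vcal(XX^*)$ and $\Vcal(XU^*+UX^*)$ to rewrite the first- and second-order terms as $\Re\iprod{\cdot}{U}$, and match against the second-order Taylor expansion. The only cosmetic difference is your explicit $t$-parameterization of the increment, which the paper handles implicitly by sorting terms into first-order, second-order, and higher-order pieces.
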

\begin{proof}
{
We first compute  the value $E_{\f} (X + U)$. For simplicity, denote $D \coloneqq XU^* + U X^*$. Using the property \eqref{equ:V:property},  by some easy calculations, we have
\begin{align}
4 E_{\f} (X + U)  
 ={}&  \iprod{\Vcal\!\left((X+U)(X + U)^*\right)}{(X+U)(X + U)^*}  \nn  \\
={}& 4 E_{\f} (X) + 2 \iprod{\Vcal(XX^*)}{D + U U^*}  + \iprod{\Vcal(D + U U^*)}{D + U U^*}  \nonumber \\ 
={}& 4 E_{\f} (X) + 2 \iprod{\Vcal(XX^*)}{D}  + 2 \iprod{\Vcal(XX^*)}{U U^*}  + \iprod{\Vcal(D)}{D} + \mbox{h.o.t.},  \nn  
\end{align} 
where h.o.t. denotes the higher-order terms. Noting that $\Vcal(XX^*)$ and $\Vcal(D)$ are both Hermitian, we have from the above assertions that 
\be\label{equ:2ndTaylor:Ef}
 E_{\f} (X + U)   = E_{\f} (X) +   \Re \iprod{\Vcal(XX^*) X}{U}  
 + \frac12  \Re \iprod{\Vcal(XX^*) U + \Vcal(D) X}{U} + \mbox{h.o.t.}.
\ee
 Finally, it follows from expansion (1.2) in \cite{wen2013adaptive} that the second-order Taylor expression in $X$ can be expressed as 
\[
E_{\f} (X + U) = E_{\f}(X) + \Re\langle \nabla E_{\f}(X), U \rangle + \frac12 \Re \iprod{\nabla^2 E_{\f}(X)[U]}{U} + \mbox{h.o.t.},
\]
which with \eqref{equ:2ndTaylor:Ef} implies \eqref{eq:fock-grad} and \eqref{eq:fock-hess}. The proof is completed. }
\end{proof} 

{Let   ${H_{\hf}(X) := H_{\ks}(X) + \Vcal(XX^*)}$ be the HF Hamilton}.  {Recalling that $E_{\hf}(X) =  E_{\ks}(X) + E_{\f}(X)$, we have from  \eqref{eq:grad-ks} and \eqref{eq:fock-grad} that}
\be\label{eq:grad-hf}
 \nabla E_{\hf}(X)  = {H_{\ks}(X)X + \Vcal(XX^*)X = H_{\hf}(X)X}
 \ee
{and have from \eqref{eq:hess-ks} and \eqref{eq:fock-hess} that}
\be \label{eq:hess-hf}
 {\nabla^2 E_{\hf}(X) [U]  = H_{\hf}(X) U + \Rcal(X)[U] +  \Vcal(XU^* + UX^*) X.}
\ee
\subsection{Review of Algorithms for the KSDFT and HF Models}
{We next briefly introduce the widely used methods for solving the KSDFT and HF models.} 
For the KSDFT model \eqref{prob:ks}, the most popular {method} is the SCF method \cite{le2005computational}. 
At the {$k$-th iteration}, SCF first fixes {$H_{\ks}(X)$} to be $H(X^k)$ and  {then updates  $X^{k+1}$ via solving the linear eigenvalue problem} 
\be \label{alg:scf}
{
 {X^{k+1} \coloneqq} \argmin_{X \in \C^{n \times p}}\  \frac12 \langle X, H(X^k) X\rangle \quad \st \quad X^* X = I_p.
}
\ee
Because {the complexity of the HF model \eqref{prob:hf} is much higher than that of the KSDFT model}, using SCF method directly may not obtain desired results. 
 Since computing $\Vcal\left(X^k(X^k)^*\right) U$ with some matrix $U$ of proper dimension is still very expensive, we investigate the limited-memory Nystr\"om approximation {$\hat{\Vcal}\left(X^k(X^k)^*\right)$ to approximate $\Vcal\left(X^k(X^k)^*\right)$} to reduce the computational cost, i.e.,
\be \label{eq:nys} {\hat{\Vcal}\left(X^k(X^k)^*\right)} := Z(Z^*\Omega)^\dag Z^*, \ee
where $Z= {\Vcal\left(X^k(X^k)^*\right)\Omega}$ and $\Omega$ is {any orthogonal matrix whose columns form an orthogonal basis of the} subspace such as  
\[ \sspan \{X^k\}, ~ \sspan \{X^{k-1}, X^k\} ~\mathrm{or}~  \sspan \{X^{k-1}, X^k, {\Vcal\left(X^k(X^k)^* \right)}X^k\}. \] 
We should note that a similar idea called adaptive compression method was proposed in {\cite{lin2016adaptively}}, which only considers to compress the operator $\Vcal(X^k(X^k)^*)$ on the subspace $\sspan \{ X^k\}$.  
Then a new subproblem is constructed as
\be 
\label{prob:2scf-inner} \min_{X \in \C^{n\times p}} \quad E_{\ks}(X) + \frac{1}{4}\iprod{{\hat{\Vcal}\left(X^k(X^k)^*\right)X}}{X} \quad \st \quad X^*X = I_p.
\ee
Here, the exact form of the easier parts $E_{\ks}$ is preserved while its second-order approximation is used in the construction of subproblem \eqref{prob:struct-QN}.
As in the subproblem \eqref{prob:struct-QN}, we can utilize the Riemannian gradient method or the modified CG method based on the following linear equation
\[ \proj_{X^k} {\left(\nabla^2 E_{\ks}(X^k)[\xi] + \frac{1}{2} \hat{\Vcal}(X^k(X^k)^*)\xi - \xi \sym((X^k)^*\nabla f(X^k))\right)} = - \grad E_{\hf}(X^k)  \]
to solve \eqref{prob:2scf-inner} inexactly.
Since \eqref{prob:2scf-inner} is a KS-like problem, we can also use the SCF method. Here, we present the detailed algorithm in Algorithm \ref{alg:ace}. \\
\begin{algorithm2e}[H]\label{alg:ace}
	\caption{Iterative method for \eqref{prob:hf} using Nystr\"om approximation}
	Input initial guess $X^0 \in \C^{n\times p}$ with $(X^0)^*X^0 = I_p$. Set $k = 0$. \\
	\While{Stopping condtions not met}{
		{Compute the limited-memory Nystr\"om approximation $\hat{\Vcal}\left(X^k(X^k)^*\right)$}. \\	
		Construct the subproblem \eqref{prob:2scf-inner} and solve it inexactly	
	    via the Riemannian gradient method or the modified CG method or {the SCF method} to obtain $X^{k+1}$. \\
		Set $k \gets k+1$.
	}
\end{algorithm2e}
We note that Algorithm \ref{alg:ace} is similar to the two-level nested SCF
method with the ACE formulation \cite{lin2016adaptively} when the subspace in \eqref{eq:nys} and inner solver for \eqref{prob:2scf-inner} are chosen as $\sspan\{ X^k\}$ and SCF, respectively. 

\subsection{{Construction of the structured approximation $\Bcal^k$}}
{Note that the Hessian of the KSDFT or HF total energy minimization takes the natural structure \eqref{equ:f:hessian:structure}, we next give the specific choices of  $\Hc(X^k)$ and $\He(X^k)$, which are key to formulate the the structured approximation $\Bcal^k$.}

For the KS problem \eqref{prob:ks}, we have its exact Hessian in
\eqref{eq:hess-ks}. Since the computational cost of the parts $\half L +
\sum_{l} \zeta_l w_lw_l^*$ are much cheaper than the remaining parts in $\nabla^2
E_{\ks}$, we can choose
 \be \label{eq:qn-ks1} \Hc(X^k) = \half L + \sum_{l} \zeta_l w_lw_l^*, \quad \He(X^k) = \nabla^2 E_{\ks}(X^k) -  \Hc(X^k). \ee

 {The exact Hessian of $E_{\hf}(X)$ in} \eqref{prob:hf} can be separated {naturally into} two parts, i.e., $\nabla^2 E_{\ks}(X) + \nabla^2 E_{\f}(X)$.
{Usually} the hybrid exchange operator {$\Vcal(XX^*)$} can take more than $95\%$ of the
overall time of the multiplication of {$H_{\hf}(X)[U]$} in many real applications \cite{lin2017convergence}. 
{Recalling  \eqref{eq:hess-ks}, \eqref{eq:fock-hess}  and \eqref{eq:hess-hf}, we know that}  the computational cost of {$\nabla^2 E_{\f} (X)$ is much higher than that of $\nabla^2 E_{\ks}(X)$}. Hence, {we obtain the decomposition as}
 \be \label{struct-B-app} \Hc(X^k) = \nabla^2 E_{\ks}(X^k), \quad \He(X^k) = \nabla^2 E_{\f}(X^k). \ee
 Moreover, we can split the Hessian of $\nabla^2 E_{\ks}(X^k)$ as {done} in \eqref{eq:qn-ks1} {and obtain an alternative decomposition as}
 \be \label{eq:qn-hf} \Hc(X^k) = {H_{\ks}(X^k)}, \quad \He(X^k) = {\nabla^2 E_{\f}(X^k) +  (\nabla^2 E_{\ks}(X^k) - {\Hc(X^k)})}. \ee
 
 {Finally, we emphasize} that the limited-memory Nystr\"om approximation \eqref{eq:nys} can serve as {a good} initial approximation for the part $\nabla^2 E_{\f}(X^k)$.

 \subsection{Subspace construction for the KSDFT model}
 As presented in Algorithm \ref{alg:struct-QN}, the subspace method  plays an
 important role when the modified CG method does not perform well. The first-order optimality conditions for \eqref{prob:ks} and \eqref{prob:hf} are
 \[ H(X)X = X\Lambda, \quad X^*X = I_p, \]
 where $X \in \C^{n\times p}$, $\Lambda$ is a diagonal matrix and $H$ represents
 $H_{\ks}$ for \eqref{prob:ks} and $H_{\hf}$ for \eqref{prob:hf}. Then, problems
 \eqref{prob:ks} and \eqref{prob:hf} are actually a nonlinear eigenvalue problem
 which aims to find the $p$ smallest eigenvalues of $H$. We should point out that in principle $X$ consists of the eigenvectors of $H(X)$ but not necessary the eigenvectors corresponding to the $p$ smallest eigenvalues. 
 Since the optima $X$ is still the eigenvectors of $H(X)$, we can construct some
 subspace which contains these possible {wanted} eigenvectors. Specifically, at
 current iterate, we first compute the first $\gamma p$ smallest eigenvalues and
 their corresponding eigenvectors of $H(X^k)$, denoted by $\Gamma^k$, then construct the subspace as
 \be \label{eq:subspace}  {\sspan\{ X^{k-1},X^{k}, \grad f(X^k), \Gamma^{k}\},} \ee
 with some small integer $\gamma$. 
 With this subspace construction, Algorithm \ref{alg:struct-QN} will more likely
 escape a stagnated point.

\section{Numerical experiments} \label{section:NUM}
In this section, we present some experiment results to illustrate the efficiency
of the limited-memory Nystr\"om approximation and our Algorithm
\ref{alg:struct-QN}. All codes were run in a workstation with Intel Xenon E5-2680 v4 processors at 2.40GHz and 256GB memory running CentOS 7.3.1611 and MATLAB R2017b. 
\subsection{Linear eigenvalue problem} 
We first construct $A$ and $B$ by using the following MATLAB commands:
\[ A = \mathrm{randn}(n,n); \, A = (A + A^\top)/2; \]
\[ B = 0.01 \mathrm{rand}(n,n);\, B = (B + B^\top)/2; \, B = B - T; \, B = -B,  \]
where $\mathrm{randn}$ and $\mathrm{rand}$ are the built-in functions in MATLAB,
$T= \lambda_{\min}(B)I_n$ and $\lambda_{\min}(B)$ is the smallest eigenvalue
of $B$.  Then $B$ is negative definite and $A$ is symmetric. In our
implementation, we compute the multiplication $BX$ using
 $\frac{1}{19} \sum_{i=1}^{19} BX$
such that $BX$ consumes about $95\%$ of the whole computational time.  In the
second example, we set $A$ to be a sparse matrix as 
\[ A = \mathrm{gallery}(\textrm{`wathen'},5s,5s)  \]
with parameter $s$ and $B$ is the same as the first example except that 
$BX$ is computed directly. 
Since $A$ is sufficiently sparse, its computational cost $AX$ is much smaller
than that of $BX$. 
We use the following stopping criterion
\be \label{stop:err} \mathrm{err}:=\max_{i = 1, \ldots, p} \left\{ \frac{\|(A+B)x_i - \mu_i x_i\|_2}{\max(1, |\mu_i|)}  \right\} \leq 10^{-10}, \ee 
where $x_i$ is the $i$-th column of the current iterate $X^k$ and $\mu_i$ is the
 corresponding approximated eigenvalue.

The numerical results of the first and second examples are summarized in  Tables
\ref{tab:rand} and \ref{tab:sparse}, respectively. In these tables, EIGS is the
built-in function ``eigs'' in MATLAB. LOBPCG is the locally optimal block
preconditioned conjugate gradient method \cite{knyazev2001toward}. ASQN is the
algorithm described in section \ref{section:EIG}. {The} difference between ACE and
ASQN is that we take $O^k$ as $\mathbf{orth}(\sspan\{X^k\})$ but not
$\mathbf{orth}(\sspan\{X^{k-1}, X^k\})$. Since  a good initial guess $X^k$ is
known at the $(k+1)$-th iteration, LOBPCG is utilized to solve the corresponding
linear eigenvalue subproblem \eqref{prob:sub-linear}. Note that $BX^{k-1}$ and
$BX^{k}$ are available from the computation of the residual, we then adopt the
orthogonalization technique in \cite{liu2013limited} to compute $O^k$ and $W^k$
in \eqref{eq:sub-lineig} without extra multiplication $BO^k$. The labels 
``AV'' and ``BV'' denote the total number of matrix-vector multiplications (MV), counting
each operation $AX, BX \in \R^{n\times p}$ as $p$ MVs.
The columns  ``err'' and ``time''  are the maximal relative error of all $p$
eigenvectors defined in \eqref{stop:err}, and the wall-clock time in seconds of each
algorithm, {respectively}. The maximal number of iterations for ASQN and ACE is
set to 200.
 
As shown in Table \ref{tab:rand}, with fixed $p = 10$ and different $n = 5000,
6000, 8000$ and $10000$, we see that ASQN performs better than EIGS, LOBPCG and
ACE in terms of both accuracy and time. ACE spends a relative long time to reach
a solution with a similar accuracy. For the case $n = 5000$, ASQN can still give
a accurate solution with less time than EIGS and LOBPCG, but ACE usually
takes a long time to get a solution of high accuracy. Similar
conclusions can also be seen from Table \ref{tab:sparse}. In which, ACE and LOBPCG do not reach the given accuracy in the cases $n=11041$ and $p=30,40,50,60$.
From the calls of $AV$ and $BV$, we see that the limited-memory Nystr\"om method reduces the number of calls on the expensive part by doing more evaluations on the cheap part. 

\begin{table} \label{tab:rand} \caption{Numerical results on random matrices}
	\footnotesize
	\centering
	\setlength{\tabcolsep}{1.5pt}
	\begin{tabular}{|c|ccc|ccc|}
		\hline
		
		& 	 AV/BV & 	 err & 	 time& 	 AV/BV & 	 err & 	 time \\ \hline
		\multicolumn{7}{|c|}{$ p = 10$} \\ \hline
		$n$ & 	 \multicolumn{3}{c|}{5000}& 	 \multicolumn{3}{c|}{6000} \\ \hline
		
		EIGS& 459/459 & 	 8.0e-11 & 	 45.1& 730/730 & 	 6.9e-11 & 	 94.3\\ \hline 
		LOBPCG& 	 1717/1717 & 	 9.9e-11 & 	 128.9& 	 2105/2105 & 	 9.8e-11 & 	 249.9\\ \hline 
		ASQN& 	 2323/150 & 	 9.2e-11 & 	 13.3& 	 2798/160 & 	 9.5e-11 & 	 22.8\\ \hline 
		ACE& 	 4056/460 & 	 9.7e-11 & 	 30.8& 	 4721/460 & 	 9.4e-11 & 	 47.4\\ \hline   
		
        $n$ & 	 \multicolumn{3}{c|}{8000}& 	 \multicolumn{3}{c|}{10000}\\ \hline
        EIGS& 538/538 & 	 8.7e-11 & 	 131.9& 981/981 & 	 8.8e-11 & 	 327.3\\ \hline 
        LOBPCG& 	 1996/1996 & 	 9.9e-11 & 	 336.7& 	 2440/2440 & 	 9.7e-11 & 	 763.8\\ \hline 
        ASQN& 	 2706/150 & 	 8.9e-11 & 	 29.8& 	 2920/150 & 	 9.7e-11 & 	 50.2\\ \hline 
        ACE& 	 4537/450 & 	 9.8e-11 & 	 66.3& 	 4554/400 & 	 9.6e-11 & 	 99.4\\ \hline

		\multicolumn{7}{|c|}{$n = 5000$} \\ \hline
		$p$ & 	 \multicolumn{3}{c|}{10}& 	 \multicolumn{3}{c|}{20} \\ \hline 	 
		EIGS& 459/459 & 	 8.0e-11 & 	 45.1& 638/638 & 	 3.2e-11 & 	 62.7\\ \hline 
		LOBPCG& 	 1717/1717 & 	 9.9e-11 & 	 128.9& 	 2914/2914 & 	 9.8e-11 & 	 130.3\\ \hline 
		ASQN& 	 2323/150 & 	 9.2e-11 & 	 13.3& 	 3809/260 & 	 9.2e-11 & 	 8.9\\ \hline 
		ACE& 	 4056/460 & 	 9.7e-11 & 	 30.8& 	 5902/680 & 	 9.5e-11 & 	 16.5\\ \hline  
				
		$p$ & \multicolumn{3}{c|}{30}& 	 \multicolumn{3}{c|}{50}\\ \hline 
		EIGS& 660/660 & 	 3.0e-11 & 	 62.8& 879/879 & 	 1.6e-12 & 	 83.6\\ \hline 
		LOBPCG& 	 4458/4458 & 	 1.0e-10 & 	 217.6& 	 5766/5766 & 	 9.5e-11 & 	 186.7\\ \hline 
		ASQN& 	 5315/420 & 	 9.8e-11 & 	 11.4& 	 7879/650 & 	 9.8e-11 & 	 17.8\\ \hline 
		ACE& 	 9701/1530 & 	 9.4e-11 & 	 23.0& 	 21664/4450 & 	 1.0e-10 & 	 50.9\\ \hline

	\end{tabular}
	
\end{table}

\begin{table} \label{tab:sparse} \caption{Numerical results on sparse matrices}
	\footnotesize
	\centering
	\setlength{\tabcolsep}{1.5pt}
	\begin{tabular}{|c|ccc|ccc|}
		\hline
		
		& 	 AV/BV & 	 err & 	 time& 	 AV/BV & 	 err & 	 time \\ \hline
		\multicolumn{7}{|c|}{$ p = 10$} \\ \hline
		$s$ & 	\multicolumn{3}{c|}{7}& 	 \multicolumn{3}{c|}{8} \\ \hline
		
		EIGS& 1589/1589 & 	 8.9e-11 & 	 10.8& 1097/1097 & 	 6.1e-11 & 	 13.4\\ \hline 
		LOBPCG& 	 3346/3346 & 	 9.8e-11 & 	 24.6& 	 4685/4685 & 	 4.6e-10 & 	 48.6\\ \hline 
		ASQN& 	 5387/180 & 	 9.6e-11 & 	 7.1& 	 4861/150 & 	 9.9e-11 & 	 5.9\\ \hline 
		ACE& 	 14361/1600 & 	 9.6e-11 & 	 21.7& 	 8810/600 & 	 9.6e-11 & 	 12.7\\ \hline 
		
		$s$ & 	 \multicolumn{3}{c|}{9}& 	 \multicolumn{3}{c|}{10}\\ \hline
		EIGS& 1326/1326 & 	 9.3e-11 & 	 21.2& 1890/1890 & 	 6.8e-11 & 	 44.4\\ \hline 
		LOBPCG& 	 4306/4306 & 	 1.7e-07 & 	 66.9& 	 3895/3895 & 	 9.9e-11 & 	 91.9\\ \hline 
		ASQN& 	 5303/190 & 	 8.5e-11 & 	 7.7& 	 6198/200 & 	 8.9e-11 & 	 10.1\\ \hline 
		ACE& 	 16253/1850 & 	 9.9e-11 & 	 34.6& 	 10760/820 & 	 9.0e-11 & 	 22.2\\ \hline 
						
		$s$ & 	 \multicolumn{3}{c|}{11}& 	 \multicolumn{3}{c|}{12} \\ \hline 	 
		EIGS& 1882/1882 & 	 1.5e-07 & 	 58.9& 1463/1463 & 	 9.6e-11 & 	 65.4\\ \hline 
		LOBPCG& 	 4282/4282 & 	 9.5e-11 & 	 136.0& 	 4089/4089 & 	 9.9e-11 & 	 190.6\\ \hline 
		ASQN& 	 8327/240 & 	 9.6e-11 & 	 16.7& 	 6910/220 & 	 9.3e-11 & 	 17.5\\ \hline 
		ACE& 	 15323/1060 & 	 9.7e-11 & 	 38.9& 	 17907/2010 & 	 1.7e-08 & 	 65.5\\ \hline 
				
		\multicolumn{7}{|c|}{$ s=12$} \\ \hline
		$p$ & 	\multicolumn{3}{c|}{10}& 	 \multicolumn{3}{c|}{20} \\ \hline
		
		EIGS& 1463/1463 & 	 9.6e-11 & 	 65.4& 1148/1148 & 	 5.8e-11 & 	 50.2\\ \hline 
		LOBPCG& 	 4089/4089 & 	 9.9e-11 & 	 190.6& 	 5530/5530 & 	 9.8e-11 & 	 86.4\\ \hline 
		ASQN& 	 6910/220 & 	 9.3e-11 & 	 17.5& 	 9749/340 & 	 9.5e-11 & 	 16.3\\ \hline 
		ACE& 	 17907/2010 & 	 1.7e-08 & 	 65.5& 	 14108/960 & 	 9.8e-11 & 	 23.4\\ \hline  
		
		$p$ & 	 \multicolumn{3}{c|}{30}& 	 \multicolumn{3}{c|}{40}\\ \hline
		EIGS& 1784/1784 & 	 8.1e-11 & 	 74.8& 1836/1836 & 	 4.8e-11 & 	 69.1\\ \hline 
		LOBPCG& 	 9076/9076 & 	 5.3e-09 & 	 173.3& 	 12192/12192 & 	 4.6e-10 & 	 207.2\\ \hline 
		ASQN& 	 17056/870 & 	 9.6e-11 & 	 41.5& 	 19967/960 & 	 9.9e-11 & 	 39.9\\ \hline 
		ACE& 	 37162/6030 & 	 9.1e-09 & 	 78.4& 	 48098/8040 & 	 4.6e-07 & 	 105.4\\ \hline 
				
		$p$ & 	 \multicolumn{3}{c|}{50}& 	 \multicolumn{3}{c|}{60} \\ \hline 	 
		EIGS& 1743/1743 & 	 7.3e-11 & 	 69.1& 2122/2122 & 	 1.6e-11 & 	 86.7\\ \hline 
		LOBPCG& 	 12288/12288 & 	 1.4e-09 & 	 168.4& 	 15716/15716 & 	 1.1e-08 & 	 199.5\\ \hline 
		ASQN& 	 21330/1300 & 	 9.3e-11 & 	 53.6& 	 26343/1620 & 	 9.7e-11 & 	 71.8\\ \hline 
		ACE& 	 49165/10050 & 	 2.9e-06 & 	 110.1& 	 62668/12060 & 	 2.3e-08 & 	 134.0\\ \hline 		
		
	\end{tabular}
	
\end{table}

\subsection{Kohn-Sham total energy minimization} \label{sec:ks}
We now test the  electron structure calculation
models in subsections \ref{sec:ks} and \ref{sec:hf} using the new version of
the KSSOLV package \cite{YangMezaLeeWang2009}. 
One of the main differences is that the new
version uses the more recently developed optimized norm-conserving Vanderbilt pseudopotentials (ONCV) \cite{hamann2013optimized}, which are compatible to those used in other community software packages such as Quantum ESPRESSO.  
The problem information is listed in Table \ref{tab:prob}. For fair
comparisons, we stop all algorithms when the Frobenius norm of the Riemannian gradient is less than $10^{-6}$ or the maximal number of iterations {is} reached. In the following tables, the column ``solver'' denotes which specified solver is used. The columns ``fval'', ``nrmG'', ``time'' are the final objective function value, the final Frobenius norm of the Riemannian gradient and the wall-clock time in seconds of each algorithm, {respectively}. 

In this test, 
we compare structured quasi-Newton method with {the} SCF in KSSOLV \cite{YangMezaLeeWang2009},  {the} Riemannian L-BFGS method (RQN) in Manopt \cite{manopt}, {the} Riemannian gradient method with BB step size (GBB) and  {the} adaptive regularized Newton method (ARNT) \cite{hu2018adaptive}.
The default parameters therein are used.  
{Our} Algorithm \ref{alg:struct-QN} with the approximation with \eqref{eq:qn-ks1} is denoted by ASQN.  {The parameters setting of ASQN is same to that} of ARNT \cite{hu2018adaptive}. 

{For each algorithm, we first} use GBB to generate a good starting point with stopping criterion 
$\|\grad f(X^k)\|_{{\Fsf}} \leq 10^{-1}$ and a maximum of $2000$ iterations. The
maximal {numbers} of iterations for SCF, GBB, ARNT, ASQN and RQN are set as
1000, 10000, 500, 500, 500 and 1000, respectively. The numerical results are
reported in Tables \ref{tab:ks} and \ref{tab:kshard}. The column ``its'' represents the total number
of iterations in SCF, GBB and RQN, while the two numbers in ARNT, ASQN are the
total 
number of outer iterations and the average numbers of inner iterations.

\begin{table}[!htbp]
	\centering
	\footnotesize
	\caption{Problem information.} \label{tab:prob} 
	\setlength{\tabcolsep}{1.5pt}
	\begin{tabular}{|c|c|c|c|}
		\hline 
		name  & 	$(n_1,n_2,n_3)$ & 	 $n$ & 	 $p$  \\ \hline 
		alanine & (91,68,61) & 35829 & 18 \\ \hline
		c12h26  &  (136,68,28) & 16099 & 37 \\ \hline
		ctube661 & (162,162,21) & 35475 & 48 \\ \hline
		glutamine & (64,55,74) & 16517 & 29 \\ \hline
		graphene16 & (91,91,23) & 12015 & 37 \\ \hline
		graphene30 & (181,181,23) & 48019 & 67 \\ \hline
		pentacene & (80,55,160) & 44791 & 51 \\ \hline
		gaas & (49,49,49) & 7153 & 36 \\ \hline
		si40 & (129,129,129) & 140089 & 80 \\ \hline
		si64 & (93,93,93) & 51627 & 128 \\ \hline
		al & (91,91,91) & 47833 & 12 \\ \hline
		ptnio & (89,48,42) & 11471 & 43 \\ \hline
		c & (46,46,46) & 6031 & 2 \\ \hline
	\end{tabular} 
\end{table}

\begin{table}[!htbp]
	\centering
	\footnotesize
	\caption{Numerical results on KS total energy minimization.}
	\label{tab:ks} 
	\setlength{\tabcolsep}{1.5pt}
	\begin{tabular}{|c|c|c|c|c||c|c|c|c|}
		\hline 
		solver  & 	 fval & 	 nrmG & 	 its  & 	 time  & 	 fval & 	 nrmG & 	 its  & 	 time \\ \hline 
		& 	 \multicolumn{4}{c||}{alanine} & \multicolumn{4}{c|}{c12h26} \\ \hline 		
		SCF            & 	 -6.27084e+1 & 	 6.3e-7 & 	 11 & 	 64.0& 	 -8.23006e+1 & 	 6.5e-7 & 	 10 & 	 61.1\\ \hline 
		GBB            & 	 -6.27084e+1 & 	 8.2e-7 & 	 92 & 	 71.3& 	 -8.23006e+1 & 	 9.5e-7 & 	 89 & 	 65.8\\ \hline 
		ARNT    & 	 -6.27084e+1 & 	 3.8e-7 & 	  3(13.3) & 	 63.0& 	 -8.23006e+1 & 	 7.5e-7 & 	  3(15.3) & 	 60.9\\ \hline 
		ASQN    & 	 -6.27084e+1 & 	 9.3e-7 & 	 13(11.8) & 	 81.9& 	 -8.23006e+1 & 	 9.3e-7 & 	 10(13.3) & 	 67.8\\ \hline 
		RQN    & 	 -6.27084e+1 & 	 1.5e-6 & 	 34 & 	 114.9& 	 -8.23006e+1 & 	 1.7e-6 & 	 45 & 	 120.0\\ \hline 
		& 	 \multicolumn{4}{c||}{ctube661} & \multicolumn{4}{c|}{glutamine} \\ \hline 
		SCF            & 	 -1.35378e+2 & 	 5.7e-7 & 	 11 & 	 200.4& 	 -9.90525e+1 & 	 4.9e-7 & 	 10 & 	 49.5\\ \hline 
		GBB            & 	 -1.35378e+2 & 	 6.3e-7 & 	 102 & 	 199.7& 	 -9.90525e+1 & 	 4.9e-7 & 	 63 & 	 44.0\\ \hline 
		ARNT    & 	 -1.35378e+2 & 	 3.2e-7 & 	  3(18.3) & 	 168.3& 	 -9.90525e+1 & 	 3.6e-7 & 	  3(12.0) & 	 42.6\\ \hline 
		ASQN    & 	 -1.35378e+2 & 	 7.6e-7 & 	 11(12.8) & 	 201.7& 	 -9.90525e+1 & 	 5.3e-7 & 	 12(9.8) & 	 50.7\\ \hline 
		RQN    & 	 -1.35378e+2 & 	 3.4e-6 & 	 40 & 	 308.8& 	 -9.90525e+1 & 	 1.8e-6 & 	 26 & 	 72.8\\ \hline 
		& 	 \multicolumn{4}{c||}{graphene16} & \multicolumn{4}{c|}{graphene30} \\ \hline 
		SCF            & 	 -9.57196e+1 &   8.7e-4 &   1000 &   3438.4 & 	 -1.76663e+2 &   3.5e-4 &   1000 &   31897.6  \\ \hline 
		GBB            & 	 -9.57220e+1 & 	 9.4e-7 & 	 434 & 	 185.1& 	 -1.76663e+2 & 	 9.0e-7 & 	 904 & 	 3383.9\\ \hline 
		ARNT    & 	 -9.57220e+1 & 	 1.8e-7 & 	  4(37.2) & 	 164.1& 	 -1.76663e+2 & 	 4.2e-7 & 	  5(74.2) & 	 2386.1\\ \hline 
		ASQN    & 	 -9.57220e+1 & 	 8.8e-7 & 	 23(24.1) & 	 221.2& 	 -1.76663e+2 & 	 7.2e-7 & 	 74(31.1) & 	 4388.1\\ \hline 
		RQN    & 	 -9.57220e+1 & 	 1.6e-6 & 	 213 & 	 287.8& 	 -1.76663e+2 & 	 3.3e-5 & 	 373 & 	 4296.7\\ \hline 
		& 	 \multicolumn{4}{c||}{pentacene} & \multicolumn{4}{c|}{gaas} \\ \hline 
		SCF            & 	 -1.30846e+2 & 	 8.5e-7 & 	 12 & 	 279.8& 	 -2.86349e+2 & 	 5.8e-7 & 	 15 & 	 41.1\\ \hline 
		GBB            & 	 -1.30846e+2 & 	 9.6e-7 & 	 101 & 	 236.1& 	 -2.86349e+2 & 	 7.5e-7 & 	 296 & 	 77.7\\ \hline 
		ARNT    & 	 -1.30846e+2 & 	 2.1e-7 & 	  3(14.0) & 	 213.6& 	  -2.86349e+2 & 	 7.4e-7 & 	  3(46.3) & 	 59.9\\ \hline 
		ASQN    & 	 -1.30846e+2 & 	 9.0e-7 & 	 23(14.5) & 	 423.0& 	 -2.86349e+2 & 	 6.0e-7 & 	 35(24.8) & 	 127.2\\ \hline 
		RQN    & 	 -1.30846e+2 & 	 2.1e-6 & 	 34 & 	 437.9& 	 -2.86349e+2 & 	 1.5e-6 & 	 111 & 	 116.0\\ \hline 
		& 	 \multicolumn{4}{c||}{si40} & \multicolumn{4}{c|}{si64} \\ \hline 
		SCF            & 	 -1.57698e+2 & 	 7.5e-7 & 	 19 & 	 3587.4& 	 -2.53730e+2 & 	 3.4e-7 & 	 10 & 	 1100.0\\ \hline 
		GBB            & 	 -1.57698e+2 & 	 8.7e-7 & 	 289 & 	 3657.2& 	 -2.53730e+2 & 	 7.3e-7 & 	 249 & 	 1534.2\\ \hline 
		ARNT    & 	 -1.57698e+2 & 	 3.7e-7 & 	  3(33.0) & 	 3343.9& 	 -2.53730e+2 & 	 7.9e-7 & 	  3(47.3) & 	 1106.8\\ \hline 
		ASQN    & 	 -1.57698e+2 & 	 9.8e-7 & 	 33(23.3) & 	 4968.7& 	 -2.53730e+2 & 	 9.4e-7 & 	 23(25.0) & 	 1563.9\\ \hline 
		RQN    & 	 -1.57698e+2 & 	 4.1e-6 & 	 62 & 	 4946.7& 	 -2.53730e+2 & 	 9.7e-7 & 	 122 & 	 2789.4\\ \hline 
		& 	 \multicolumn{4}{c||}{al} & \multicolumn{4}{c|}{ptnio} \\ \hline 
		SCF            &   -3.52151e+2 &   7.4e+0 &   1000 &   4221.1 & 	 -9.25762e+2 & 	 1.9e-1 & 	 {1000} & 	 4461.9\\ \hline 
		GBB            & 	 -3.53707e+2 & 	 9.7e-7 & 	 1129 & 	 219.3& 	 -9.26927e+2 & 	 2.4e-6 & 	 10000 & 	 5627.2\\ \hline 
		ARNT    & 	 -3.53710e+2 & 	 5.9e-7 & 	 59(60.7) & 	 947.7& 	 -9.26927e+2 & 	 9.4e-7 & 	 104(129.6) & 	 7558.3\\ \hline 
		ASQN    & 	 -3.53710e+2 & 	 7.1e-7 & 	 94(47.3) & 	 1395.4& 	 -9.26927e+2 & 	 9.2e-7 & 	 153(69.6) & 	 12728.1\\ \hline 
		RQN    & 	 -3.53710e+2 & 	 1.8e-3 & 	 267 & 	 323.4& 	 -9.26925e+2 & 	 2.3e-4 & 	 380 & 	 924.4\\ \hline  
	\end{tabular} 
\end{table}

From Tables \ref{tab:ks} and \ref{tab:kshard}, we can see that SCF failed in ``graphene16'', ``graphene30'', ``al'', ``ptnio'' and ``c''. 
 We next  explain  why SCF fails by taking ``c'' and ``graphene16'' as examples. 
For the case ``c'', we obtain the same solution by using GBB, ARNT and ASQN. The
number of wanted wave functions are 2, i.e., $p=2$. With some abuse of notation, we
denote the final solution by $X = [x_1,x_2]$. Since $X$ satisfies the
first-order optimality condition, the columns of $X$ are also eigenvectors of $H(X)$ and the corresponding eigenvalues of
$H(X)$ are -1.8790, -0.6058. On the other hand, the smallest four eigenvalues of
$H(X)$ are -1.8790, -0.6577, -0.6058, -0.6058 and the corresponding
eigenvectors, denoted by $Y = [y_1,y_2,y_3,y_4]$. The  energies and
norms of Riemannian gradients of the different eigenvector pairs
$[x_1,x_2],~[y_1,y_2],~[y_1,y_3]$ and $[y_1,y_4]$ are $ (-5.3127, 9.96 \times
10^{-7}), (-5.2903, 3.07\times 10^{-1}), (-5.2937, 1.82 \times 10^{-1})$ and
$(-4.6759, 1.82 \times 10^{-1})$, respectively. Comparing the angles between $X$
and $Y$ shows that $x_1$ is nearly parallel to $y_1$ but $x_2$ lies in the
subspace spanned by $[y_3,y_4]$ other than $y_2$. Hence, when the SCF method
is used around $X$, the next point will jump to the subspace spanned by
$[y_1,y_2]$. This indicates the failure of the \textit{aufbau} principle, and
thus the failure of the SCF procedure. This is consistent with the observation
in the chemistry literature \cite{van2003density}, where sometimes the converged
solution may have a ``hole'' (i.e., unoccupied states) below the highest occupied energy level.

In the case ``graphene16'', we still obtain  the same solution from GBB, ARNT and
ASQN. The number of wave functions $p$ is 37. Let $X$ be the computed solution and
the corresponding eigenvalues of $H(X)$ be $d$. The smallest 37 eigenvalues and their
corresponding eigenvectors of  $H(X)$ are $g$ and $Y$. 
We find that the first 36 elements of $d$ and $g$ are almost the same up to a
machine accuracy, but the 37th element of $d$ and $g$ is 0.5821 and 0.5783,
respectively. The energies and norms of Riemannian gradients of $X$ and $Y$ are
$(-94.2613,8.65\times 10^{-7})$ and $(-94.2030,6.95\times 10^{-1})$, respectively. Hence, SCF
does not converge around the point $X$. 

In Tables \ref{tab:ks} and \ref{tab:kshard},  ARNT  usually converges in a few
iterations due to the usage of the second-order information. It is often the fastest
one in terms of time since the computational cost of two parts of the Hessian $\nabla^2 E_{\ks}$ has no significant difference. 
GBB also performs comparably well as ARNT. ASQN works reasonably well on most
problems. It takes more iterations than ARNT since the limit-memory approximation
often is not as good as the Hessian. Because the costs of solving the
subproblems of ASQN and ARNT are more or less the same, ASQN is not competitive
to ARNT. However, by taking advantage of the problem structures, ASQN is still
 better than RQN in terms of computational time and accuracy. Finally,  
 we show the convergence behaviors of these five  methods on the system
 ``glutamine'' in Figure \ref{fig:ks1}. 
Specifically, the error of the objective function values is defined as 
\[ \Delta E_{\ks}(X^k) = E_{\ks}(X^k) - E_{\min}, \]
where $E_{\min}$ be the minimum of the total energy attained
by all methods. 

\begin{figure}[htb]
	\subfigure[$\Delta E_{\ks}(X^k)$ versus iterations]{
		\includegraphics[width=0.45\textwidth,height=0.33\textwidth]
		{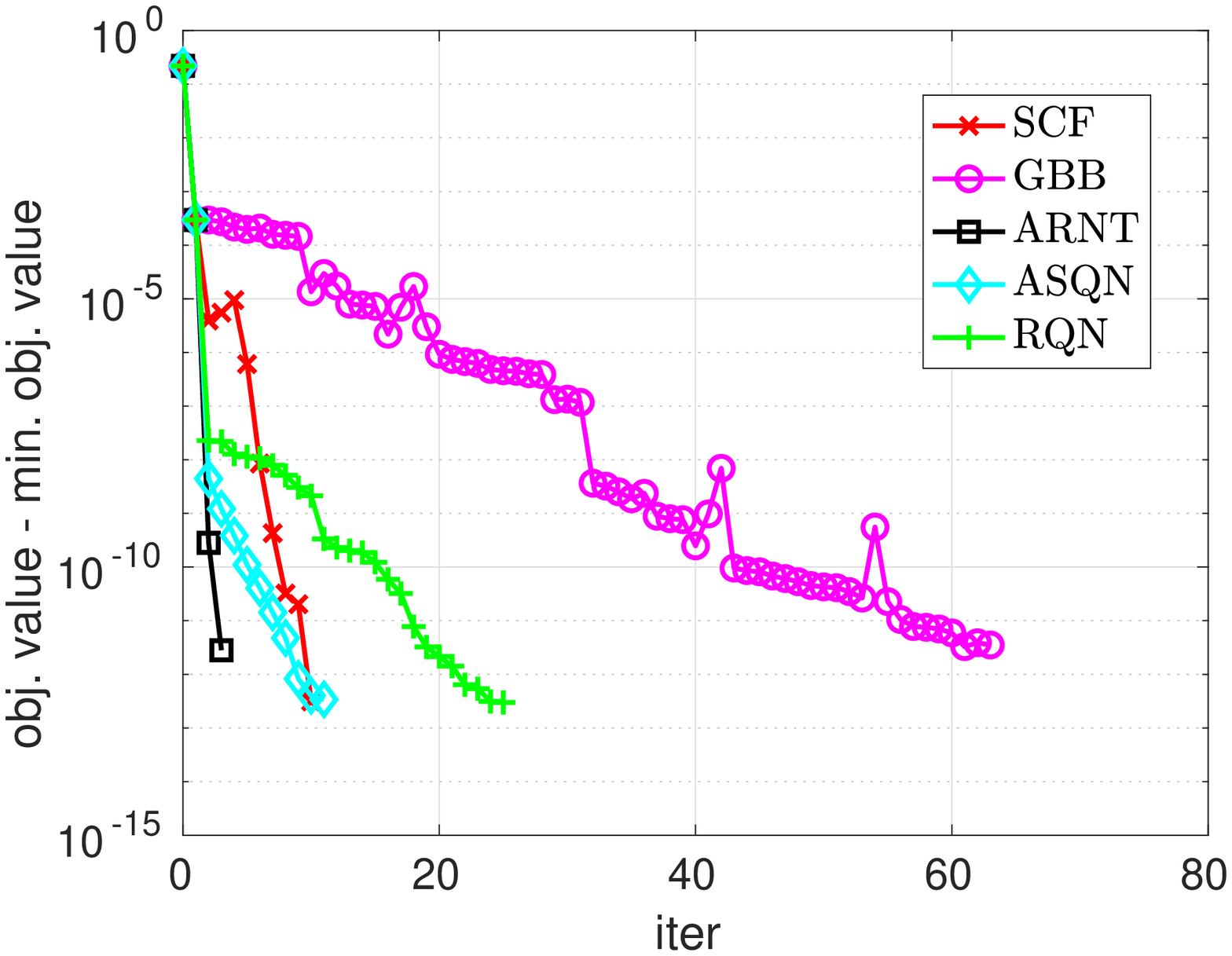}
	}
	\subfigure[$\Delta E_{\ks}(X^k)$ versus time]{
		\includegraphics[width=0.45\textwidth,height=0.33\textwidth]
		{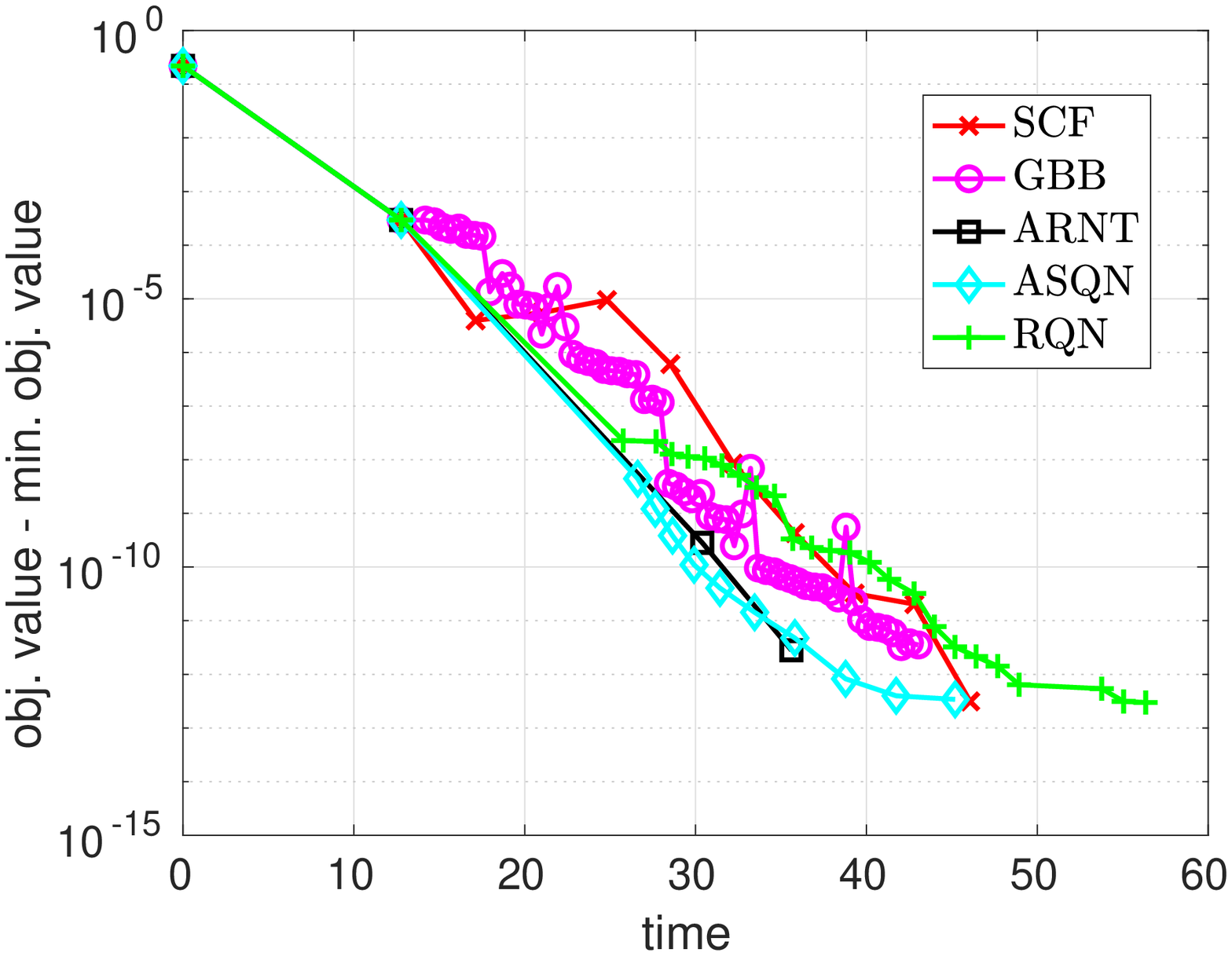}
	}
	
	\subfigure[$\|\grad E_{\ks}(X^k)\|_\Fsf$ versus iterations]{
		\includegraphics[width=0.45\textwidth,height=0.33\textwidth]
		{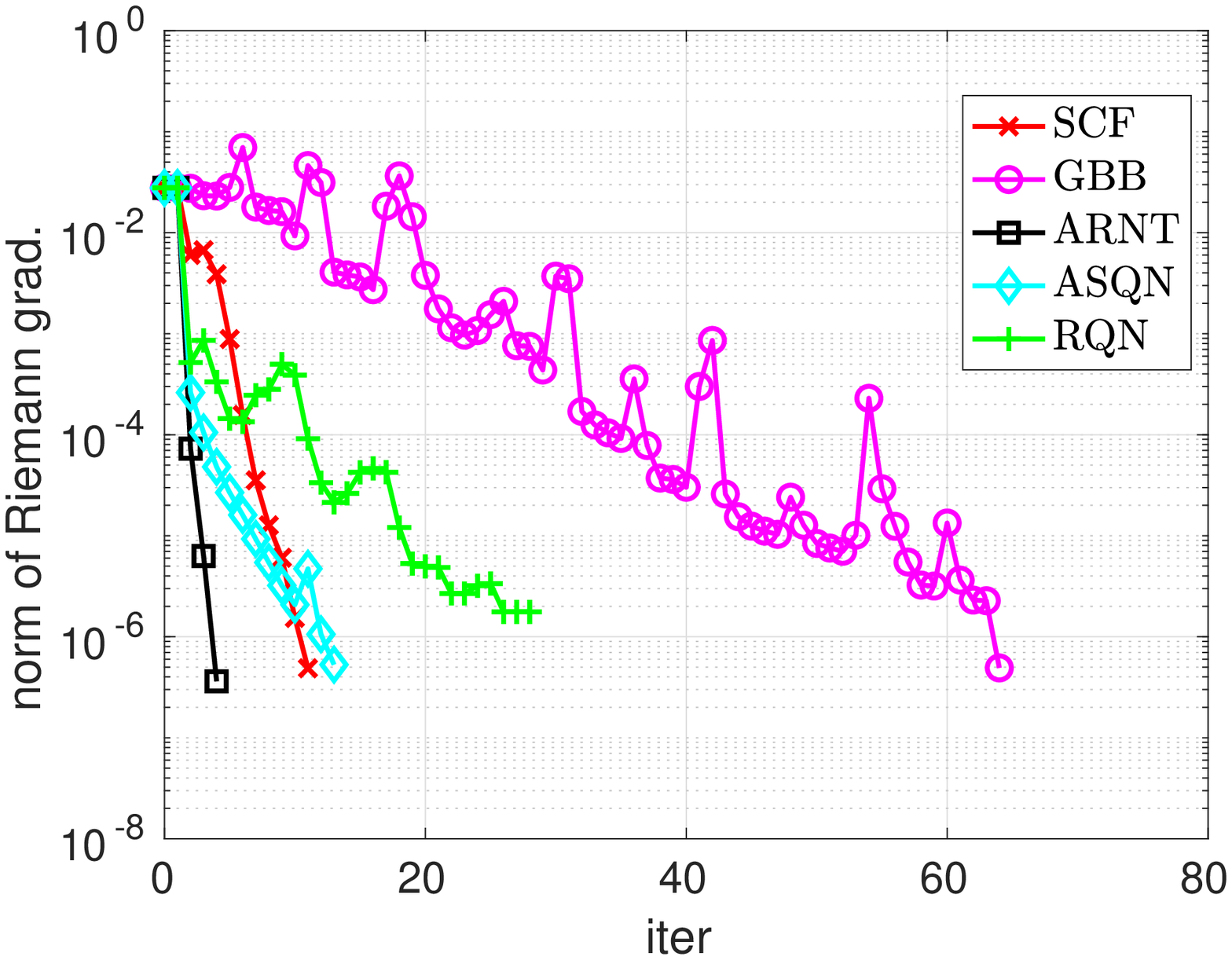}
	}
	\subfigure[$\|\grad E_{\ks}(X^k)\|_\Fsf$ versus time]{
		\includegraphics[width=0.45\textwidth,height=0.33\textwidth]
		{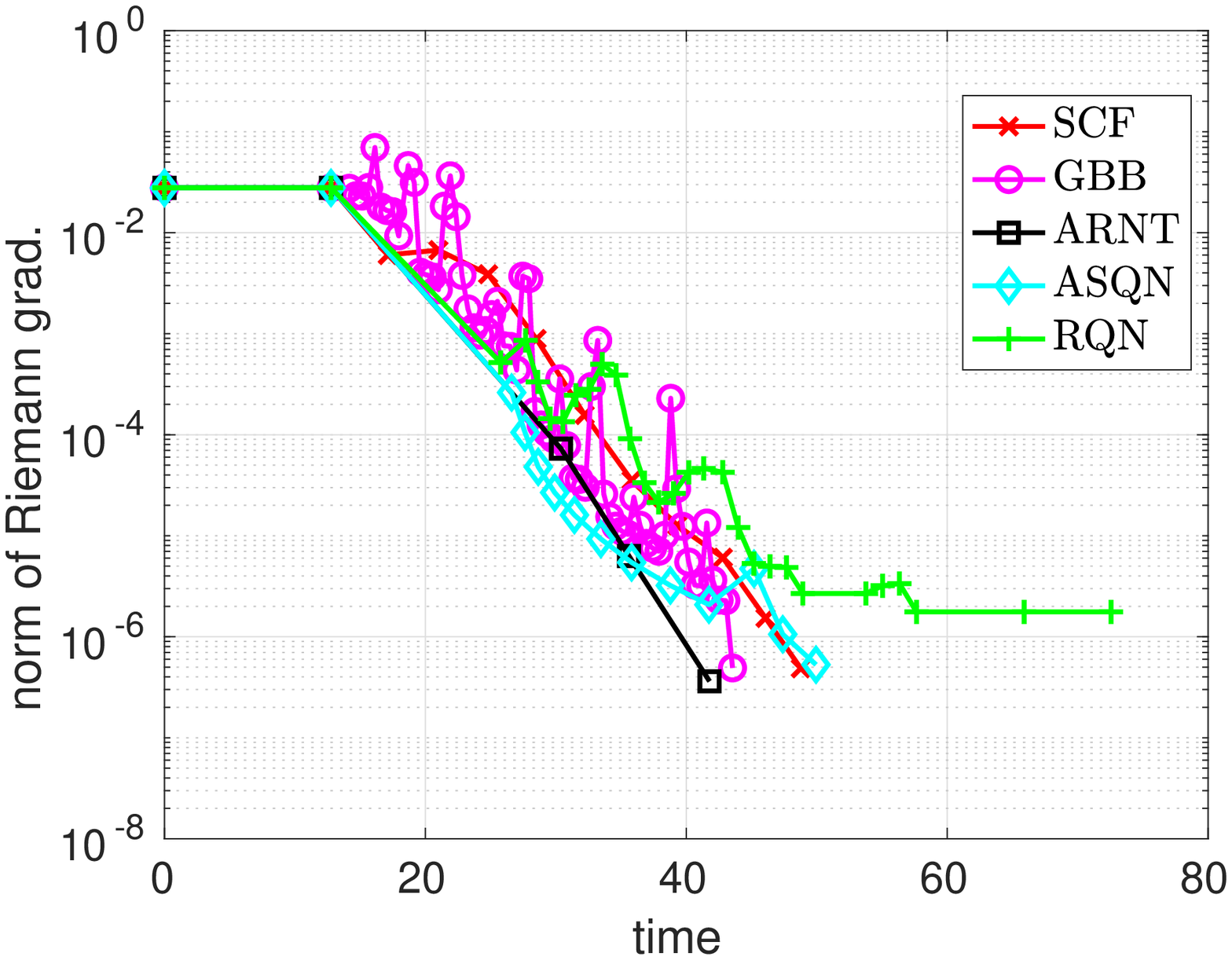}
	}  
	\caption{Comparisons of different algorithms on ``glutamine'' of KS total energy minimization. The first two points are the input and output of the initial solver GBB, respectively.}
	\label{fig:ks1}
\end{figure}

\begin{table}[!htp]
	\footnotesize
	\centering
	\caption{Numerical results on KS total energy minimization.} \label{tab:kshard} 
	\setlength{\tabcolsep}{1.5pt}
	\begin{tabular}{|c|c|c|c|c|}
		\hline 
		solver  & 	 fval & 	 nrmG & 	 its  & 	 time \\ \hline 
		\multicolumn{5}{|c|}{c} \\ \hline 
		SCF   & 	  -5.29296e+0 &   7.3e-3 &   1000 &   168.3 \\ \hline 
		GBB     & -5.31268e{+0} & 	 1.0e-6 & 	 3851 & 	 112.7\\ \hline 
		ARNT   & 	 -5.31268e{+0} & 	 5.7e-7 & 	 96(49.1) & 	 211.3\\ \hline 
		ASQN  & 	 -5.31268e{+0} & 	 6.7e-7 & 	 104(38.5) & 	 183.1\\ \hline 
		RQN    & 	 -5.31244e{+0}& 	 1.4e-3 & 	 73 & 	 10.8\\ \hline 
	\end{tabular} 
\end{table}

\subsection{Hartree-Fock total energy minimization} \label{sec:hf}
In this subsection, we compare the performance of three variants of Algorithm \ref{alg:ace} where the
subproblem is solved by SCF (ACE), the modified CG method (ARN) and by GBB
(GBBN), respectively, the Riemannian L-BFGS (RQN) method in Manopt
\cite{manopt}, and two variants of Algorithm
\ref{alg:struct-QN} with approximation \eqref{struct-B-app} (ASQN) and
approximation \eqref{eq:qn-hf} (AKQN).  Since the computation of the exact Hessian $\nabla^2 E_{\hf}$ is time-consuming, we do not present the results using the exact Hessian.  The limited-memory Nystr\"om
approximation \eqref{eq:nys} serves as an initial Hessian approximation in both
ASQN and AKQN. To compare the effectiveness of quasi-Newton approximation, we
set $\He(X^k)$ to be the limited-memory Nystr\"om approximation \eqref{eq:nys}
in \eqref{eq:qn-hf} and use the same framework as in Algorithm
\ref{alg:struct-QN}.  We should mention that the subspace refinement is not
used in ASQN and AKQN. Hence, only structured quasi-Newton iterations are
performed in them. The default parameters in RQN and GBB are used. For ACE,
GBBN, ASQN, AKQN and ARN, the subproblem is solved until the Frobenius-norm of
the Riemannian gradient is less than $0.1\min\{\|\grad f(X^k)\|_{{\Fsf}},
1\}$. We also use the adaptive strategy for choosing the maximal number of inner iterations of ARNT in \cite{hu2018adaptive} for GBBN, ASQN, AKQN and ARN. The settings of other parameters of ASQN, AKQN and ARN are the same to those in ARNT \cite{hu2018adaptive}.
For all algorithms, we generate a good initial guess by using GBB to solve the
corresponding KS total energy minimization problem (i.e., remove $E_{\f}$ part
from $E_{\hf}$ in the objective function) until a maximal number of iterations
2000 is reached or the Frobenius-norm of the Riemannian gradient is
smaller than $10^{-3}$. The maximal number of iterations for ACE, GBBN, ASQN,
ARN and AKQN is set to 200 while that of RQN is set to 1000.  

\begin{table}[!htp]
	\footnotesize
	\centering
	\caption{Numerical results on HF total energy minimization.} \label{tab:hf} 
	\setlength{\tabcolsep}{1.3pt}
	\begin{tabular}{|c|c|c|c|c||c|c|c|c|}
		\hline 
		solver  & 	 fval & 	 nrmG & 	 its  & 	 time  & 	 fval & 	 nrmG & 	 its  & 	 time \\ \hline 
		& 	 \multicolumn{4}{c||}{alanine} & \multicolumn{4}{c|}{c12h26} \\ \hline
		ACE            & 	 -6.61821e+1 & 	 3.8e-7 & 	 11(3.0) & 	 261.7& 	 -8.83756e+1 & 	 3.9e-7 & 	  8(2.9) & 	 259.7\\ \hline 
		GBBN   & 	 -6.61821e+1 & 	 1.0e-6 & 	 11(17.4) & 	 268.8& 	 -8.83756e+1 & 	 4.9e-4 & 	 200(68.7) & 	 11839.8\\ \hline 
		ARN    & 	 -6.61821e+1 & 	 9.5e-7 & 	 10(13.7) & 	 206.6& 	 -8.83756e+1 & 	 4.9e-4 & 	 200(2.4) & 	 4230.3\\ \hline 
		ASQN    & 	 -6.61821e+1 & 	 9.1e-7 & 	  7(14.1) & 	 169.6& 	 -8.83756e+1 & 	 2.1e-7 & 	  7(12.6) & 	 234.1\\ \hline 
		AKQN   & 	 -6.61821e+1 & 	 4.8e-7 & 	 31(7.5) & 	 530.2& 	 -8.83756e+1 & 	 4.9e-7 & 	 29(7.6) & 	 871.2\\ \hline 
		RQN    & 	 -6.61821e+1 & 	 1.9e-6 & 	 76 & 	 1428.5& 	 -8.83756e+1 & 	 1.3e-3 & 	 45 & 	 3446.3\\ \hline 
		& 	 \multicolumn{4}{c||}{ctube661} & \multicolumn{4}{c|}{glutamine} \\ \hline 
		ACE            & 	 -1.43611e+2 & 	 9.2e-7 & 	  8(2.8) & 	 795.0& 	 -1.04525e+2 & 	 3.9e-7 & 	 10(3.0) & 	 229.6\\ \hline 
		GBBN   & 	 -1.43611e+2 & 	 6.5e-7 & 	 10(26.3) & 	 1399.2& 	 -1.04525e+2 & 	 8.4e-7 & 	 11(13.3) & 	 256.9\\ \hline 
		ARN    & 	 -1.43611e+2 & 	 6.0e-7 & 	  9(14.1) & 	 832.7& 	 -1.04525e+2 & 	 8.8e-7 & 	 10(9.5) & 	 209.5\\ \hline 
		ASQN    & 	 -1.43611e+2 & 	 2.0e-7 & 	  8(13.2) & 	 777.1& 	 -1.04525e+2 & 	 1.5e-7 & 	  8(10.1) & 	 182.9\\ \hline 
		AKQN   & 	 -1.43611e+2 & 	 6.1e-7 & 	 17(10.3) & 	 1502.0& 	 -1.04525e+2 & 	 9.1e-7 & 	 25(6.0) & 	 515.7\\ \hline 
		RQN    & 	 -1.43611e+2 & 	 7.2e-6 & 	 59 & 	 6509.0& 	 -1.04525e+2 & 	 2.9e-6 & 	 57 & 	 1532.8\\ \hline 
		& 	 \multicolumn{4}{c||}{graphene16} & \multicolumn{4}{c|}{graphene30} \\ \hline 
		ACE            & 	 -1.01716e+2 & 	 7.6e-7 & 	 13(3.4) & 	 367.0 & -1.87603e+2 & 	 8.6e-7 & 	 58(4.2) & 	 14992.0 	\\ \hline 
		GBBN   & 	 -1.01716e+2 & 	 4.2e-7 & 	 14(42.1) & 	 659.0 & -1.87603e+2 & 	 8.9e-7 & 	 29(72.2) & 	 19701.8                 \\ \hline 
		ARN    & 	 -1.01716e+2 & 	 4.5e-7 & 	 14(23.0) & 	 403.6 & -1.87603e+2 & 	 9.0e-7 & 	 45(35.6) & 	 14860.6\\ \hline 
		ASQN    & 	 -1.01716e+2 & 	 4.9e-7 & 	 11(20.2) & 	 357.5 & -1.87603e+2 & 	 7.6e-7 & 	 15(26.5) & 	 6183.0 \\ \hline 
		AKQN   & 	 -1.01716e+2 & 	 7.9e-7 & 	 49(15.1) & 	 1011.0 & -1.87603e+2 & 	 8.0e-7 & 	 39(12.3) & 	 9770.7\\ \hline 
		RQN    & 	 -1.01716e+2 & 	 1.0e-3 & 	 74 & 	 2978.9 & -1.87603e+2 & 	 1.5e-5 & 	 110 & 	 39091.0 \\ \hline 
		& 	 \multicolumn{4}{c||}{pentacene} & \multicolumn{4}{c|}{gaas} \\ \hline 
		ACE            & 	 -1.39290e+2 & 	 6.2e-7 & 	 13(3.0) & 	 1569.5& 	 -2.93496e+2 & 	 8.8e-7 & 	 29(2.9) & 	 343.8\\ \hline 
		GBBN   & 	 -1.39290e+2 & 	 8.2e-7 & 	 16(23.0) & 	 2620.2& 	 -2.93496e+2 & 	 9.3e-7 & 	 34(35.3) & 	 659.3\\ \hline 
		ARN    & 	 -1.39290e+2 & 	 7.2e-7 & 	 15(12.2) & 	 1708.1& 	 -2.93496e+2 & 	 9.6e-7 & 	 31(20.4) & 	 468.7\\ \hline 
		ASQN    & 	 -1.39290e+2 & 	 1.9e-7 & 	  9(14.3) & 	 1168.1& 	 -2.93496e+2 & 	 3.3e-7 & 	 10(28.0) & 	 199.5\\ \hline 
		AKQN   & 	 -1.39290e+2 & 	 5.4e-7 & 	 29(8.5) & 	 3458.4& 	 -2.93496e+2 & 	 4.6e-7 & 	 22(18.4) & 	 347.1\\ \hline 
		RQN    & 	 -1.39290e+2 & 	 2.4e-6 & 	 73 & 	 11363.8& 	 -2.93496e+2 & 	 1.0e-6 & 	 126 & 	 2154.1\\ \hline 
		& 	 \multicolumn{4}{c||}{si40} & \multicolumn{4}{c|}{si64} \\ \hline 
		ACE            & 	 -1.65698e+2 & 	 9.2e-7 & 	 29(4.5) & 	 30256.4& 	 -2.67284e+2 & 	 9.8e-7 & 	  9(2.9) & 	 6974.3\\ \hline 
		GBBN   & -1.65698e+2 & 	 8.6e-7 & 	 24(43.9) & 	 34692.4 & -2.67284e+2 & 5.3e-7 & 14(27.0) & 11467.9 \\ \hline
		ARN    & 	 -1.65698e+2 & 	 8.0e-7 & 	 22(22.1) & 	 21181.3& 	 -2.67284e+2 & 	 7.7e-7 & 	 12(18.6) & 	 9180.7\\ \hline 
		ASQN    & 	 -1.65698e+2 & 	 2.8e-7 & 	 12(37.8) & 	 15369.5& 	 -2.67284e+2 & 	 3.0e-7 & 	  8(21.9) & 	 6764.7\\ \hline
		AKQN   & -1.65698e+2 & 	 9.2e-7 & 	 87(7.9) & 	    89358.8 &-2.67284e+2 & 7.1e-7 & 24(18.8) & 33379.0 \\ \hline
		RQN    & -1.65698e+2 &      6.1e-6 &       156  &      181976.8  &-2.67284e+2 & 8.4e-7 & 112 & 115728.8 \\\hline
	\end{tabular} 
\end{table}

A detailed summary
of computational results is reported in Table \ref{tab:hf}. We see that ASQN performs best
among all the algorithms in terms of both the number of iterations and time,
especially in the systems: ``alanine'', ``graphene30'', ``gaas'' and ``si40''. Usually,
algorithms takes fewer iterations if more parts in the Hessian are preserved.
Since the computational cost of the Fock energy dominates that of the KS part,
algorithms using fewer outer iterations consumes less time to converge. Hence,
ASQN is faster than AKQN. Comparing with ARN and RQN, we see that ASQN benefits from
our quasi-Newton technique. Using a scaled identity matrix as the initial
 guess, RQN takes many more iterations than our algorithms which use the adaptive compressed form
 of the hybrid exchange operator. ASQN is two times faster than ACE in
 ``graphene30'' and ``si40''. Finally,  
  the convergence behaviors of these six  methods on the system
 ``glutamine'' in Figure \ref{fig:hf1}, where $\Delta E_{\hf}(X^k)$ is defined
 similarly as the KS case.  In summary, algorithms utilizing the quasi-Newton
 technique combining with the Nystr\"om approximation is often able to give better performance.   

\begin{figure}[htb]
	\subfigure[$\Delta E_{\hf}(X^k)$ versus iterations]{
		\includegraphics[width=0.45\textwidth,height=0.33\textwidth]
		{./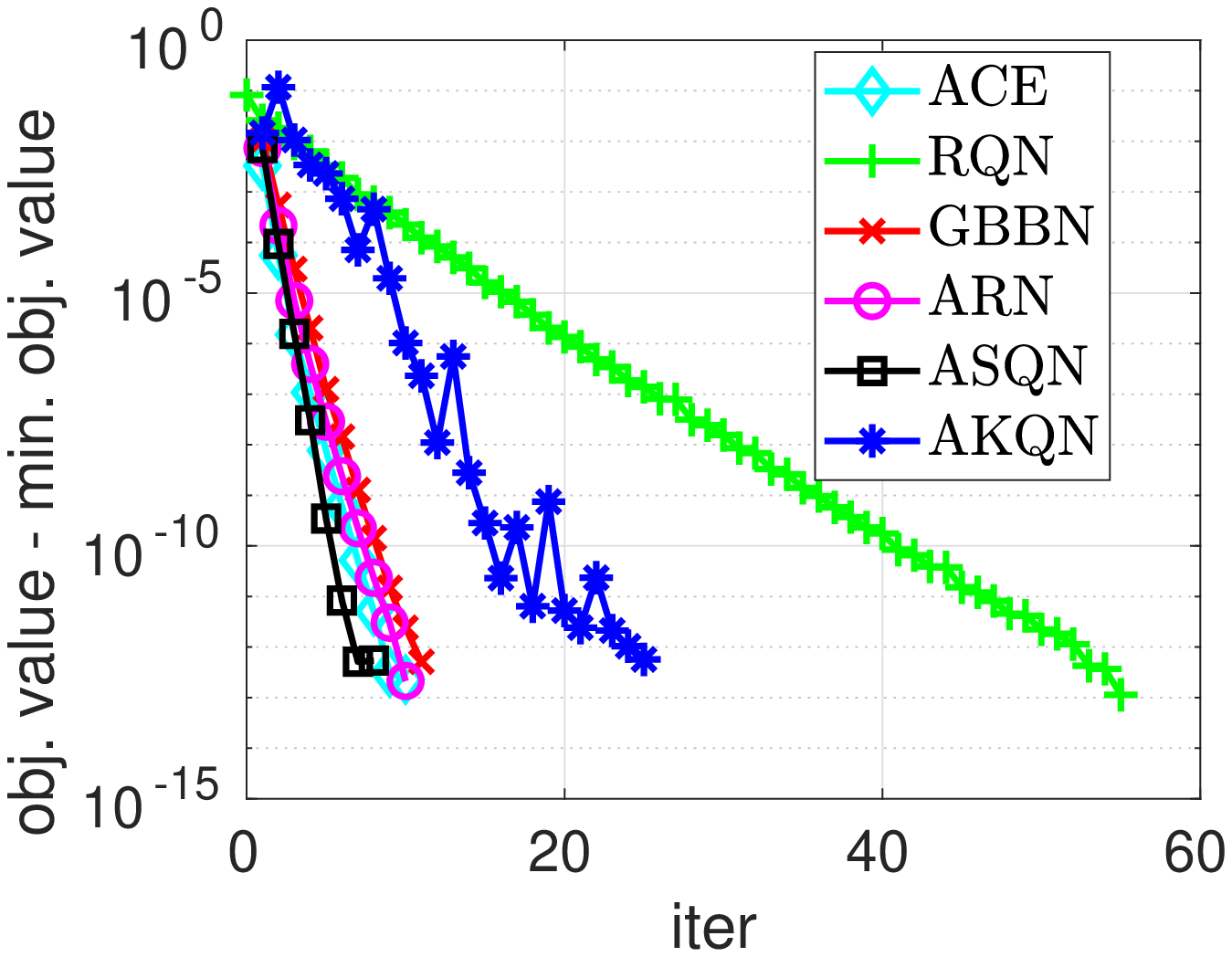}
	}
	\subfigure[$\Delta E_{\hf}(X^k)$ versus time]{
		\includegraphics[width=0.45\textwidth,height=0.33\textwidth]
		{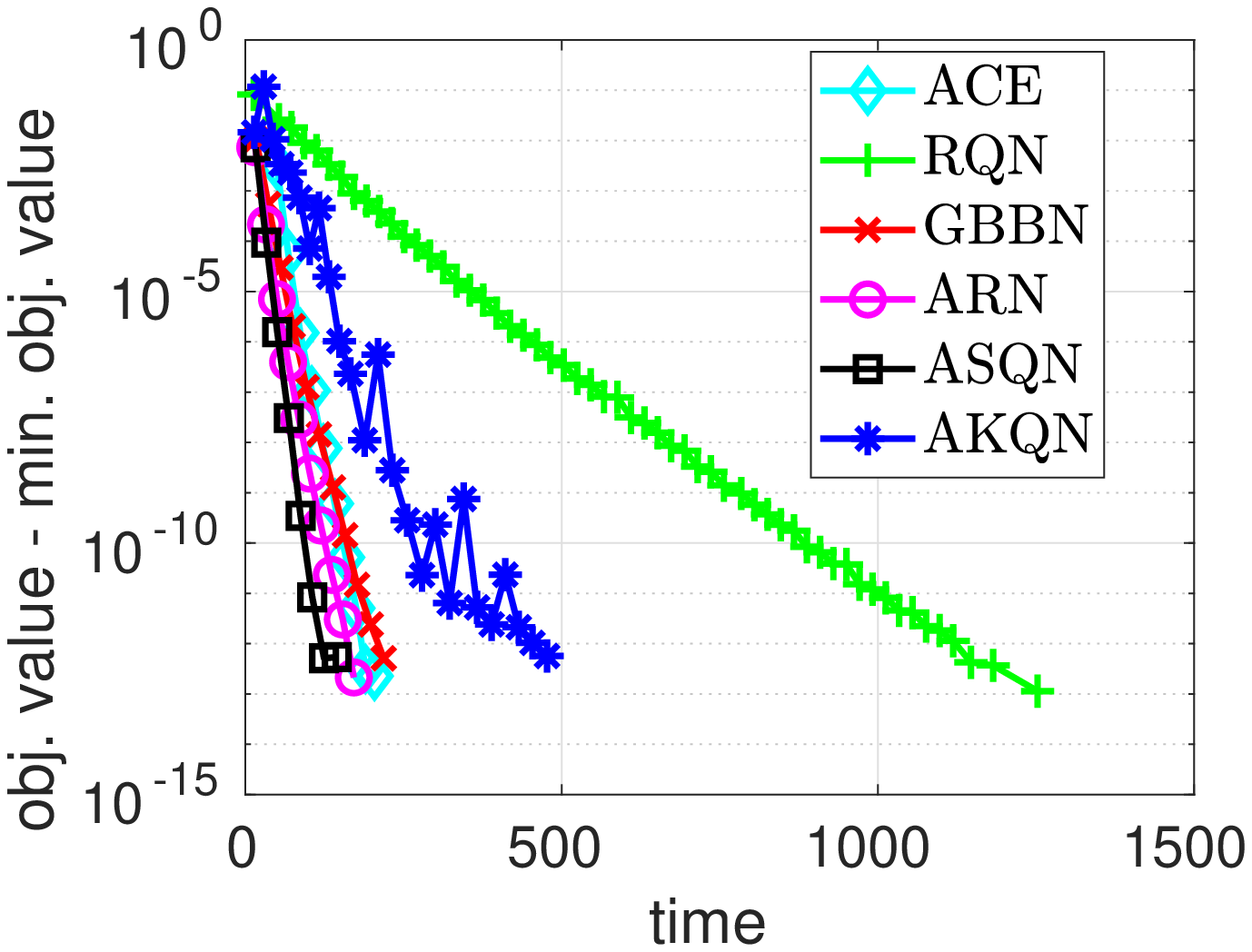}
	}
	
	\subfigure[$\|\grad E_{\hf}(X^k)\|_\Fsf$ versus iteration]{
		\includegraphics[width=0.45\textwidth,height=0.33\textwidth]
		{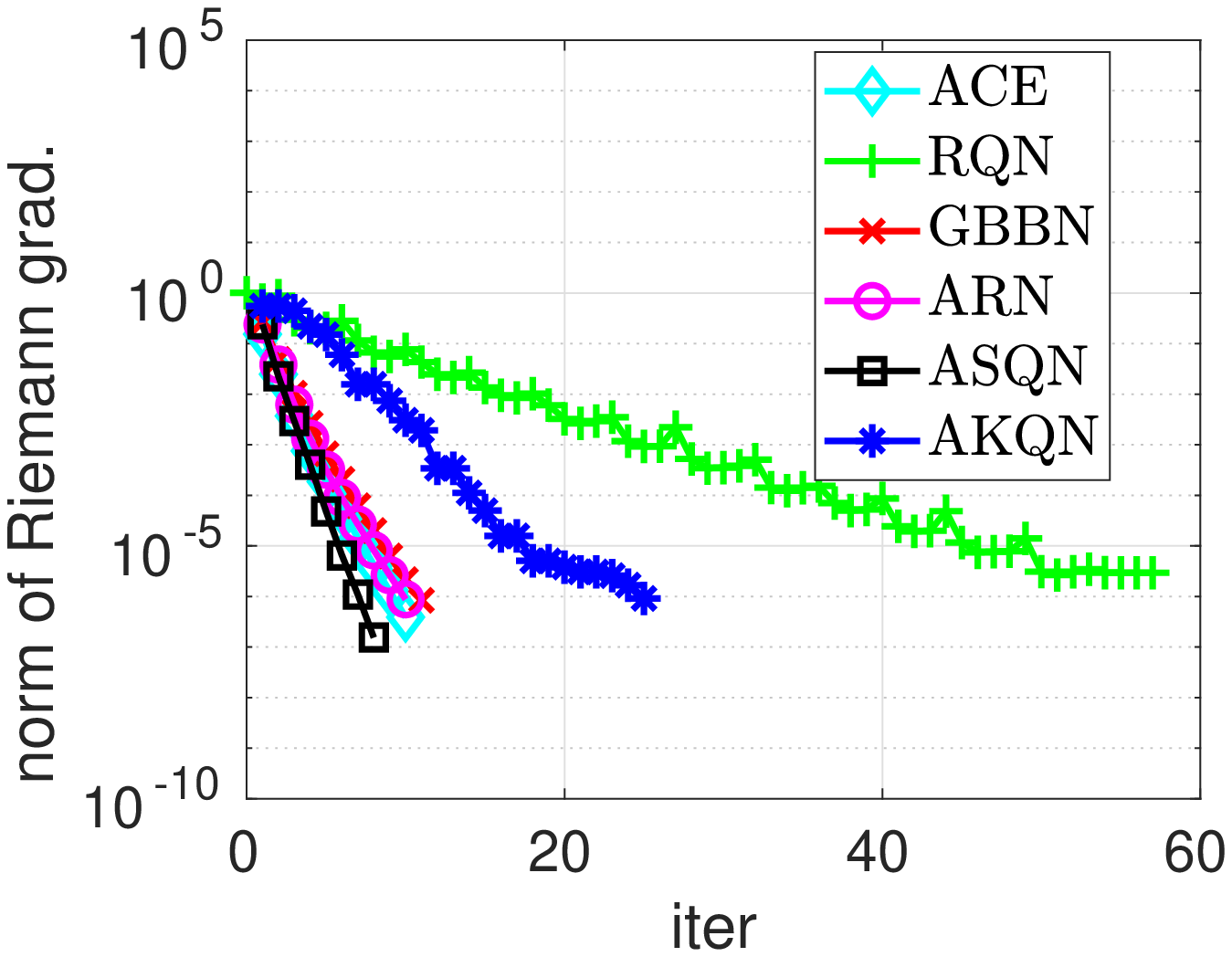}
	}
	\subfigure[$\|\grad E_{\hf}(X^k)\|_\Fsf$ versus time]{
		\includegraphics[width=0.45\textwidth,height=0.33\textwidth]
		{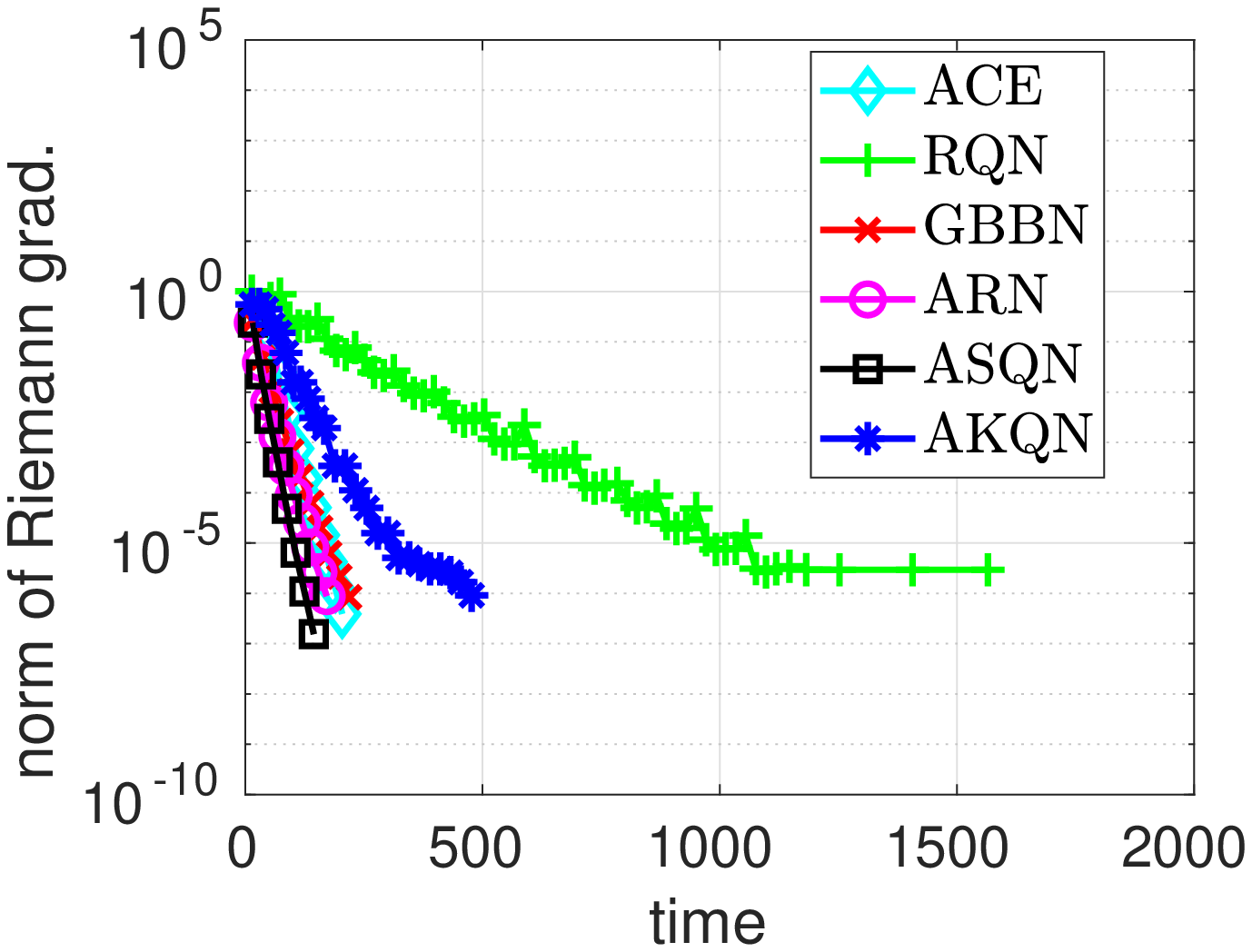}
	}  
	\caption{Comparisons of different algorithms on ``glutamine'' of HF total energy minimization.}
	\label{fig:hf1}
\end{figure}

\section{Conclusion}
We present a structured quasi-Newton method for optimization with orthogonality
constraints. Instead of approximating the full Riemannian Hessian directly, we
construct an approximation to the Euclidean Hessian and a regularized subproblem
using this approximation while the orthogonality constraints are kept. By solving
the subproblem inexactly, the global and local q-superlinear convergence can be
guaranteed under certain assumptions. Our structured quasi-Newton method also takes
advantage of the structure of the objective function if some parts are much
more expensive to be evaluated than other parts. Our numerical experiments on
the linear eigenvalue problems, KSDFT and HF total energy minimization
demonstrate that our structured quasi-Newton algorithm is very competitive with the state-of-art algorithms. 

The performance of the quasi-Newton methods can be further improved in several
 perspectives. For example, finding a better initial quasi-Newton matrix than
 the Nystr\"om approximation and developing a better quasi-Newton approximation
 than the LSR1 technique. Our technique can also be extended to the general
 Riemannian optimization with similar structures.

\bibliographystyle{siamplain}
\bibliography{optimization}

\end{document}